\newtheorem {Problem} {Problem}[section]
\newtheorem {Theorem} [Problem]{Theorem}
\newtheorem {Lemma}[Problem]{Lemma}
\newtheorem{Definition} [Problem]{Definition}
\newtheorem {Corollary}[Problem]{Corollary}
\newenvironment {Proof}{\noindent {\bf Proof.}}{\hfill\ensuremath{\square}}
\newcommand*{\QEDB}{\hfill\ensuremath{\square}}
\newtheorem{case}{\indent Case}
\newtheorem{2case}{\indent Case}[case]
\newtheorem{3case}{\indent Case}[2case]
\begin{document}

\title{ Localized Version of Hypergraph Erd\H os-Gallai Theorem \thanks{This work is partly  supported by  National Natural Science Foundation of China (Nos. 12371354, 11971311, 12161141003),  the Science and Technology
Commission of Shanghai Municipality (No. 22JC1403600), National Key R\&D
Program of China (No. 2022YFA1006400) and the Fundamental Research
Funds for the Central Universities.
}}

\author{ Kai Zhao, Xiao-Dong Zhang\footnote{Corresponding author. E-mail: xiaodong@sjtu.edu.cn (X.-D. Zhang)}\\
\\School of Mathematical Sciences, MOE-LSC, SHL-MAC\\
Shanghai Jiao Tong University,
Shanghai 200240, P. R. China\\
Email: wkidd@sjtu.edu.cn, xiaodong@sjtu.edu.cn.\\
\\
}
\date{}
\maketitle

\begin{abstract}
This paper focuses on extensions of the classic Erd\H{o}s-Gallai Theorem  for the set of weighted function of each edge in a graph.
The weighted function of an edge $e$ of  an  $n$-vertex uniform hypergraph $\mathcal{H}$ is defined to a special function  with respect to the number of edges of the longest Berge path containing $e$.  We prove that  the summation of the weighted function of all edges is at most $n$ for an $n$-vertex uniform hypergraph $\mathcal{H}$ and characterize all extremal hypergraphs that attain the value, which  strengthens and extends the hypergraph version of  the classic Erd\H{o}s-Gallai Theorem.

{\it AMS Classification:} 05C65, 05C35 \\ \\
{\it Key words:}  Hypergraph; Berge path; Erd\H{o}s-Gallai Theorem,  Weighted function.

\end{abstract}

\section{Introduction}

In 1959, Erd\H{o}s and Gallai proved the following classical result on the Tur\'{a}n number of paths. Let $P_k$ be a path of length $k$. Denote by  $ex(n,F)$ the maximum number of edges in an $n$-vertex simple graph $G$ which does not contain $F$ as a subgraph of $G$ for a given graph $F$.

\begin{Theorem}[Erd\H{o}s, Gallai
\cite{erdHos1959maximal}] \label{theg}
	Let $n\geq k\geq 1$ be two integers and $P_k$ be a path of length $k$. Then $ex(n,P_k)\leq \frac{n}{2}(k-1)$.
\end{Theorem}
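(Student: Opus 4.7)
The plan is to proceed by induction on $n$, keeping $k$ fixed. The base case $n \leq k$ is immediate since then $e(G) \leq \binom{n}{2} = \frac{n(n-1)}{2} \leq \frac{n(k-1)}{2}$, matching the bound. For the inductive step I would first reduce to connected graphs: if $G$ is disconnected, applying the inductive hypothesis to each connected component and summing yields the desired inequality. So it suffices to treat connected $G$ with $n > k$.

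For such $G$, my approach is the classical longest-path P\'osa-rotation technique. I would fix a longest path $P = v_0 v_1 \cdots v_p$ in $G$; since $G$ is $P_k$-free we have $p \leq k-1$, and since $|V(P)| = p+1 \leq k < n$ some vertex $w \notin V(P)$ exists. Connectivity lets me assume $w$ is adjacent to some $v_j \in V(P)$, and maximality of $P$ forces $N(v_0), N(v_p) \subseteq V(P)$. The key claim is that no index $i \in \{1,\ldots,p\}$ can satisfy both $v_0 v_i \in E(G)$ and $v_{i-1} v_p \in E(G)$: otherwise the cycle $v_0 v_1 \cdots v_{i-1} v_p v_{p-1} \cdots v_i v_0$ would span $V(P)$, and traversing this cycle starting at $v_j$ with $w$ prepended would produce a path of length $p+1$, contradicting the choice of $P$. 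Disjointness of the two index sets $\{i : v_0 v_i \in E\}$ and $\{i : v_{i-1} v_p \in E\} + 1$ inside $\{1,\ldots,p\}$ then gives $\deg(v_0) + \deg(v_p) \leq p \leq k-1$, so one of the endpoints, which I call $v$, satisfies $\deg(v) \leq \frac{k-1}{2}$.

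With such a $v$ in hand the induction closes: $G - v$ is still $P_k$-free on $n-1$ vertices, so the inductive hypothesis bounds $e(G-v) \leq \frac{(n-1)(k-1)}{2}$, and $e(G) \leq e(G-v) + \deg(v) \leq \frac{n(k-1)}{2}$, as required. The step I expect to need the most care on is the P\'osa rotation itself: one must verify cleanly that the rotated Hamilton cycle on $V(P)$ combined with the external attachment $w v_j$ really produces a path strictly longer than $P$ (the corner cases $j = 0$ or $j = p$ are handled \emph{a fortiori}, since they would already extend $P$ directly). Once this local combinatorial check is in place the remainder is bookkeeping, and the constant $\frac{k-1}{2}$ has been arranged precisely so that the telescoping induction yields the sharp bound.
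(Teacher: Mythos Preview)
Your argument is the classical P\'osa--rotation proof and is correct. One small notational slip: the ``$+1$'' in your index-set statement is redundant, since $\{i : v_{i-1}v_p \in E\}$ already lives in $\{1,\dots,p\}$, and your cycle argument shows it is disjoint there from $\{i : v_0 v_i \in E\}$; this gives $\deg(v_0)+\deg(v_p)\le p\le k-1$ directly.

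There is, however, no proof in the paper to compare against. Theorem~\ref{theg} is quoted only as classical background with a citation to the original Erd\H{o}s--Gallai paper; at the one place where the $r=2$ case is needed (the proof of Corollary~\ref{cor3}) the authors simply write ``If $r=2$, the assertion follows from Erd\H{o}s--Gallai Theorem'' and treat it as a black box. The paper's actual work is the $r\ge 3$ hypergraph localization (Theorem~\ref{th3conn}), whose method --- locate a ``good'' set $S$ with $|N(S)|\le f_r(k)|S|$, delete it, and induct --- is philosophically akin to your delete-a-low-degree-endpoint induction, but the apparatus for producing $S$ (Theorems~\ref{thx} and~\ref{thy}) is considerably heavier and genuinely hypergraph-specific. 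If one wanted a proof of Theorem~\ref{theg} in the paper's spirit, it would run through the cited localized graph result of Malec--Tompkins (Theorem~\ref{thdm}): $P_k$-freeness forces $p(e)\le k-1$ for every edge, whence $\tfrac{2|E(G)|}{k-1}\le \sum_{e}\tfrac{2}{p(e)}\le n$.
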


In a hypergraph, there are many ways to define paths and cycles. Berge introduced a kind of  paths and cycles in hypergraphs, known as Berge paths and Berge cycles.

\begin{Definition}
	[Berge
\cite{berge1973graphs}] A Berge path  $P_k$ of length $k$ is an alternating sequence of distinct vertices and edges, $v_0 e_1 v_1 e_2 v_2 \dots v_{k-1} e_k v_k$ such that $v_{i-1},v_i\in e_i$ for $i=1,2,\dots,k$.   Vertices $v_0, \ldots, v_k$ are called defining vertices  and the edges $e_1, \ldots, e_k$ are called defining edges. 	Further, denote by $V(P_k)$ and $E(P_k)$  the sets of  defining vertices and defining edges of $P_k$, respectively. Moreover, denote by $P_{v_i\sim v_j}$  the segment $v_i e_{i+1} v_{i+1} e_{i+2} v_{i+1}\dots v_{j-1} e_{j} v_j$ of a Berge path $P$.

\end{Definition}

\begin{Definition}
	[Berge
 \cite{berge1973graphs}] A 
 Berge cycle  $C_k$ of length $k$ is an alternating sequence of distinct vertices and edges, $v_0 e_1 v_1 e_2 v_2 \dots v_{k-1} e_k v_0$ such that $v_{i-1},v_i\in e_i$ for $i=1,2,\dots,k$ (where indices are taken modulo $k$).  Vertices $v_0, \ldots, v_{k-1}$ are called defining vertices and the edges $e_1, \ldots, e_k$ are called defining edges, respectively. 	Moreover, denote by $V(C_k)$ and $E(C_k)$  the sets of  defining vertices and defining edges of $C_k$, respectively.

\end{Definition}
Denote $\cup_{e\in E(P)}e$  and $\cup_{e\in E(C)}e$ by $\cup E(P)$ and $\cup E(C)$, respectively.  Note that it is not necessarily $V(P)=\cup E(P)$ for a Berge path $P$ or $V(C)=\cup E(C)$ for a Berge cycle $C$. Furthermore,  denote by  $\mathcal{BP}_k$ the family of Berge paths of length $k$.  Moreover, denote by $ex_r(n, \mathcal{BP}_k)$ the maximum number of hyperedges in a simple $n$-vertex  and $r$-uniform hypergraph  which does not contain any Berge paths $P_k$ of length $k$ as a subgraph.   Gy\H{o}ri, Katona and Lemons \cite{gyHori2016hypergraph} firstly extended the Erd\H{o}s-Gallai Theorem  for Berge paths in $r$-uniform hypergraphs.

\begin{Theorem}[Gy\H{o}ri, Katona and Lemons \cite{gyHori2016hypergraph}]\label{th1}  Let $n, r, k$ be three positive integers.

(1). If  $n\geq r\geq k> 2$, then $ex_r(n,\mathcal{BP}_k)\leq \frac{n}{r+1}(k-1)$.

(2). If $n\geq k> r+1\geq 3$, then  $ex_r(n,\mathcal{BP}_k)\leq \frac{n}{k}\binom{k}{r}$.
\end{Theorem}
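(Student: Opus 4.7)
The plan is to prove both inequalities by combining induction on $n$ with a structural analysis of a longest Berge path in $\mathcal{H}$. The base cases---$n = r$ for Part (1) and $n = k$ for Part (2)---reduce to direct edge counting on a small vertex set. For the inductive step, note that both target inequalities are linear in $n$, so if $\mathcal{H}$ is disconnected the claim follows by applying the inductive hypothesis to each component and summing; hence one may assume $\mathcal{H}$ is connected.

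Fix a longest Berge path $P = v_0 e_1 v_1 \cdots e_\ell v_\ell$ in $\mathcal{H}$ with $\ell \leq k-1$. The central observation is that for any edge $e \in E(\mathcal{H}) \setminus \{e_1, \ldots, e_\ell\}$ containing the endpoint $v_0$, every vertex of $e \setminus \{v_0\}$ must lie in $V(P) \cup (\cup E(P))$; otherwise $P$ could be prolonged by prepending a fresh vertex via $e$, contradicting maximality. A P\'osa-style rotation, in which a defining edge $e_i$ is swapped for another edge through $v_0$, produces new longest Berge paths whose endpoints sweep out a set $S \subseteq V(P)$, each inheriting the same no-extension restriction. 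This forces the total edge-vertex incidences between $S$ and $E(\mathcal{H})$ to be controlled by $|V(P) \cup (\cup E(P))|$, which in turn is governed by $\ell$ and $r$.

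The main obstacle is converting this structural picture into the two quantitative bounds, because the geometry of the rotation differs between the two regimes. For Part (1), where $r \geq k$, each edge already contains at least $k$ vertices, so the rotation analysis pins the edges into ``blocks'' of at most $r+1$ vertices with at most $k-1$ edges each, yielding $\frac{n(k-1)}{r+1}$. For Part (2), where $r+1 < k$, the edges are smaller and the blocks have size at most $k$; each such block can host up to $\binom{k}{r}$ edges, yielding $\frac{n}{k}\binom{k}{r}$. In both cases, the extremal configurations are disjoint unions of such dense blocks, and the inequality is obtained by showing that any additional edge would produce a longer Berge path. I anticipate that the hardest technical point is making the rotation rigorous in the Berge setting, where a single edge can intersect $P$ in many vertices at once and the notion of ``the edge immediately before $v_0$'' does not have the unambiguous graph-theoretic meaning needed for a clean swap.
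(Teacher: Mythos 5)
Your plan does not follow the paper's route: the paper never proves this theorem by a direct path analysis, but obtains it as an immediate corollary of its localized inequality (Corollary \ref{th3}). There, one notes that a $\mathcal{BP}_k$-free hypergraph has $p_{\mathcal{H}}(e)\leq k-1$ for every edge, so by monotonicity of $f_r$, $|E(\mathcal{H})|/f_r(k-1)\leq \sum_{e\in E(\mathcal{H})}1/f_r(p_{\mathcal{H}}(e))\leq n$, and $nf_r(k-1)$ is exactly $\frac{n}{r+1}(k-1)$ when $2<k\leq r$ and $\frac{n}{k}\binom{k}{r}$ when $k\geq r+1$. Your proposal instead attempts a self-contained induction-plus-rotation argument in the spirit of the original Gy\H{o}ri--Katona--Lemons proof. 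That route is legitimate in principle, but as written it is a plan rather than a proof, and it contains concrete gaps.

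First, your ``central observation'' is stated too weakly to be usable: maximality of $P$ forces every edge $e\notin E(P)$ with $v_0\in e$ to satisfy $e\subseteq V(P)$, the set of at most $\ell+1\leq k$ \emph{defining} vertices, since any vertex of $e$ outside $V(P)$ would prepend to $P$ and lengthen it. Containment in $V(P)\cup(\cup E(P))$, a set whose size is only bounded by roughly $r\ell$, gives no control of the kind your counting needs. Second, the decisive quantitative step is merely asserted: the claim that rotations ``pin the edges into blocks'' of at most $r+1$ vertices carrying at most $k-1$ edges (Part 1), or of at most $k$ vertices carrying at most $\binom{k}{r}$ edges (Part 2), is essentially the statement of the theorem itself, and you give no argument for why the endpoint set produced by Berge-path rotations yields such a decomposition, nor how edges that meet a deleted block without lying inside it are handled in the induction. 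Third, you yourself identify the Berge-setting rotation as the hard point; in the paper this is exactly what Lemma \ref{le1} together with Theorems \ref{thx} and \ref{thy} supply (a nonempty ``good'' set $S$ with all edges of $N(S)$ on longest Berge paths and $|N(S)|\leq f_r(k)|S|$, which is what lets the induction close). Without an analogue of that machinery, or of the weighted-function reduction the paper uses, the proposed argument does not go through.
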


Later, in 2018 Davoodi, Gy\H{o}ri, Methuku and Tompkins \cite{davoodi2018erdHos} settled the remaining cases for $k=r+1$.

\begin{Theorem}[Davoodi, Gy\H{o}ri, Methuku, Tompkins \cite{davoodi2018erdHos}]
	\label{th0}
	Let  $n\geq k= r+1\geq 3$ be three positive integers. Then $ex_r(n,\mathcal{BP}_k)\leq n$.
\end{Theorem}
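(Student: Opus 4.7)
The plan is to argue by induction on $n$, with base case $n=r+1$ being immediate since $|E(\mathcal{H})|\leq\binom{r+1}{r}=r+1=n$. For the inductive step $n>r+1$, if some vertex $v$ satisfies $d_{\mathcal{H}}(v)\leq 1$, then $\mathcal{H}-v$ is an $r$-uniform $P_{r+1}$-free hypergraph on $n-1$ vertices with at least $|E(\mathcal{H})|-1$ edges, so the inductive hypothesis gives $|E(\mathcal{H})|\leq n$. Hence we may assume $\delta(\mathcal{H})\geq 2$, and this is the substantive case on which I focus next.

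Under this assumption I would consider a longest Berge path $P=v_0\, e_1\, v_1\cdots v_{\ell-1}\, e_\ell\, v_\ell$, which necessarily has length $\ell\leq r$. The maximality of $P$ forces the key structural observation: any edge $e\ni v_0$ with $e\neq e_1$ must satisfy $e\subseteq V(P)$, since otherwise a vertex $u\in e\setminus V(P)$ produces the strictly longer Berge path $u\,e\,v_0\,e_1\cdots v_\ell$. Because $|e|=r$, this forces $|V(P)|\geq r$, so $\ell\in\{r-1,r\}$. In the main case $\ell=r$ the edges at $v_0$ other than $e_1$ are precisely the $r$-subsets of the $(r+1)$-set $V(P)$ containing $v_0$; the analogous statement holds at $v_\ell$, while the boundary case $\ell=r-1$ admits only $V(P)$ itself as an extra edge at each endpoint.

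The remainder of the argument combines this rigidity with a P\'{o}sa-style rotation along $P$: exchanging $e_\ell$ at $v_\ell$ with another $r$-subset of $V(P)$ incident to $v_\ell$ produces a longest Berge path whose new endpoint is a different vertex of $V(P)$, itself subject to the same constraint that all its incident edges lie in $V(P)$. Iterating these rotations, together with $\delta(\mathcal{H})\geq 2$ to treat external vertices on edges of $E(P)$, shows that the connected component of $\mathcal{H}$ containing $P$ lies entirely within $V(P)$ and is a sub-hypergraph of $K_{r+1}^{(r)}$, contributing at most $r+1$ edges on $r+1$ vertices. Removing this component leaves a smaller $P_{r+1}$-free hypergraph to which induction applies; moreover, tracking when equality is achieved shows that $|E(\mathcal{H})|=n$ forces every component to be a copy of $K_{r+1}^{(r)}$. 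The main obstacle is the rotation step: one must verify that enough $r$-subsets of $V(P)$ are actually edges of $\mathcal{H}$ to reach every vertex of the component as an endpoint of some rotated longest path, that each rotated path remains a valid Berge path of the same length with distinct defining edges, and that any edge of $E(P)$ having a vertex outside $V(P)$ would, in conjunction with the rotations and $\delta(\mathcal{H})\geq 2$, yield a forbidden Berge $P_{r+1}$.
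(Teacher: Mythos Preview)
Your plan is a genuinely different route from the paper's. Here Theorem~\ref{th0} is not proved directly at all: it falls out as the case $k=r+1$ of Corollary~\ref{cor3}, which in turn is a one-line consequence of the localized inequality of Corollary~\ref{th3}. Concretely, if $\mathcal{H}$ is $\mathcal{BP}_{r+1}$-free then $p_{\mathcal{H}}(e)\le r$ for every edge, so $f_r(p_{\mathcal{H}}(e))\le f_r(r)=1$ and the weighted bound $\sum_e 1/f_r(p_{\mathcal{H}}(e))\le n$ gives $|E(\mathcal{H})|\le n$ immediately. All the structural work (Lemma~\ref{le3}, Corollaries~\ref{l5c1}--\ref{l5c2}, Theorem~\ref{thx}) is done upstream to establish the localized inequality, and the Erd\H os--Gallai statement is a corollary. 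Your direct induction with a minimum-degree reduction and a longest-path/rotation argument is closer in spirit to the original Davoodi--Gy\H ori--Methuku--Tompkins proof; it is more elementary for this single value of $k$, whereas the paper's approach buys the full range $k\ge 2$ and the equality characterization in one stroke.

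Two concrete issues in your sketch. First, the claim ``any edge $e\ni v_0$ with $e\ne e_1$ must satisfy $e\subseteq V(P)$'' is only valid for $e\notin E(P)$: if $e=e_j$ with $j\ge 2$, the sequence $u\,e\,v_0\,e_1\cdots v_\ell$ repeats $e_j$ and is not a Berge path, so the conclusion $\ell\ge r-1$ is not yet justified and the case $N(v_0)\subseteq E(P)$ needs its own treatment. Second, the rotation obstacle you flag is real and not merely bookkeeping: rotations move the endpoint within $V(P)$ but need not reach every $v_j$, and they do not by themselves rule out vertices of $\bigcup E(P)$ lying outside $V(P)$ or edges outside $E(P)$ meeting non-endpoint vertices. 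In this paper, precisely these situations are handled by Lemma~\ref{le3} and Corollaries~\ref{l5c1}--\ref{l5c2} (building an auxiliary path or cycle to force the $(r+1)$-vertex complete structure), and the case analysis there is not entirely trivial. Your plan can be completed, but doing so essentially amounts to reproving those structural lemmas for $k=r$.
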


 Recently, in order to deeply extend the classic Tur\'{a}n  theorem, Brada\u{c} \cite{bradac2022} and  Malec and Tompkins \cite{malec2023localized} independently studied the behavior of edges in an arbitrary $n$-vertex graph $G$  for a given weighted function on edge set. Malec and Tompkins  
  \cite{malec2023localized} defined that  $w(e)$ is  the maximum value $r$ such that $e$ occurs in a subgraph of $G$ isomorphic to a complete graph $K_r$.
  They proved the following results.

  \begin{Theorem}[Brada\u{c} \cite{bradac2022}, and  Malec and Tompkins \cite{malec2023localized}]\label{turan} Let $G$ be an $n$-vertex graph. Then
  $$\sum_{e\in E(G)}\frac{w(e)}{w(e)-1}\le \frac{n^2}{2}.$$
  \end{Theorem}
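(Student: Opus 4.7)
My plan is to combine a direct calculation on complete multipartite graphs with a Zykov-type symmetrization that reduces the general case to that extremal family. For the target case, observe that in $K_{n_1,\dots,n_r}$ every edge lies in a $K_r$ (pick one vertex per part) and in no $K_{r+1}$, so $w(e)=r$ is constant across the edge set. Combining $|E|=\tfrac{1}{2}(n^2-\sum_i n_i^2)$ with the Cauchy--Schwarz bound $\sum_i n_i^2\ge n^2/r$ yields
$$\sum_{e\in E}\frac{w(e)}{w(e)-1}=\frac{r}{r-1}|E|\le \frac{n^2}{2},$$
with equality exactly at the balanced partition, i.e.\ at the Tur\'an graph $T(n,r)$. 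So the bound is tight precisely where one expects it to be.

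The core of the argument is the symmetrization inequality: for any non-adjacent pair $u,v\in V(G)$, let $G_{u\to v}$ be obtained by deleting $u$ and inserting a twin $u'$ of $v$ (so $N(u')=N(v)$ with $u'v\notin E$), and define $G_{v\to u}$ symmetrically. Write $S(G):=\sum_e \tfrac{w(e)}{w(e)-1}$ and $S_x(G):=\sum_{y\in N(x)}\tfrac{w(\{x,y\})}{w(\{x,y\})-1}$. For an edge $e$ disjoint from $\{u,v\}$, let $w_0(e)$ be the largest clique through $e$ in $G-\{u,v\}$ and let $w_x(e)$ denote the largest clique through $e$ that also contains $x$; then
$$w_G(e)=\max\bigl(w_0(e),w_u(e),w_v(e)\bigr),\qquad w_{G_{u\to v}}(e)=\max\bigl(w_0(e),w_v(e)\bigr),$$
because $u$ is gone and the new vertex $u'$ together with $v$ spans exactly the $v$-cliques. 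Thus $w_{G_{u\to v}}(e)\le w_G(e)$, and since $t\mapsto t/(t-1)$ is decreasing, $\tfrac{w_{G_{u\to v}}(e)}{w_{G_{u\to v}}(e)-1}\ge \tfrac{w_G(e)}{w_G(e)-1}$; the analogous statement holds for $G_{v\to u}$. For edges meeting $\{u,v\}$, a direct computation using the twin relation shows that $u'$ and $v$ together contribute $2S_v(G)$ to $S(G_{u\to v})$ (and $2S_u(G)$ to $S(G_{v\to u})$), while the corresponding contribution in $G$ was $S_u(G)+S_v(G)$. Adding these estimates yields $S(G_{u\to v})-S(G)\ge S_v(G)-S_u(G)$ and $S(G_{v\to u})-S(G)\ge S_u(G)-S_v(G)$, so that $S(G_{u\to v})+S(G_{v\to u})\ge 2S(G)$ and in particular $\max\{S(G_{u\to v}),S(G_{v\to u})\}\ge S(G)$.

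Iterating Step~2 is intended to drive $G$ to a graph in which non-adjacency is an equivalence relation (equivalently, a complete multipartite graph), at which point Step~1 closes the argument; tracking equality throughout identifies the Tur\'an graphs as the extremal configurations. The step I expect to be the main obstacle is not the per-move monotonicity of $S$ (which is automatic from the decomposition above) but the termination of the iteration: a single symmetrization creates the twin pair $(u',v)$ but may disturb pre-existing near-twin structure elsewhere, so the number of twin classes need not monotonically shrink. To handle this I plan to first fix an $n$-vertex graph maximizing $S$, and then among all such $S$-maximizers select one that is extremal for a carefully chosen secondary potential, such as the lexicographic order of the sorted degree sequence, or the number of edges; any non-adjacent non-twin pair in such a graph would then be symmetrizable without loss of $S$ and with a strict improvement in the secondary potential, contradicting the choice. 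The resulting extremal graph is complete multipartite, so Step~1 applies and the bound $S(G)\le n^2/2$ follows.
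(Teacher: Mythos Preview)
The paper does not give its own proof of this statement: Theorem~\ref{turan} is quoted as a known result of Brada\u{c} and of Malec--Tompkins, with citations and no argument. So there is nothing in the paper to compare your approach against; your proposal has to be judged on its own.

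Your symmetrization inequality is correct and cleanly argued. For non-adjacent $u,v$ and an edge $e$ disjoint from $\{u,v\}$, the identity $w_{G_{u\to v}}(e)=\max(w_0(e),w_v(e))\le w_G(e)$ together with the monotone decrease of $t\mapsto t/(t-1)$ gives a nonnegative ``far'' change in both symmetrizations, and the near contribution really is $2S_v(G)$ (the cliques through $v$ are untouched, and swapping $u'\leftrightarrow v$ matches the $u'$-cliques with the $v$-cliques). Hence $S(G_{u\to v})+S(G_{v\to u})\ge 2S(G)$, and in particular any $S$-maximizer $G$ satisfies $S(G_{u\to v})=S(G_{v\to u})=S(G)$ for every non-adjacent pair, not just for one of the two.

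The genuine gap is the termination step as you describe it. From ``pick an $S$-maximizer with maximal $|E|$'' you cannot conclude that a single symmetrization on a non-twin non-adjacent pair \emph{strictly} increases $|E|$: if $d(u)=d(v)$ the move is edge-neutral, so your contradiction does not fire. The fix uses the stronger fact you have already earned, that \emph{both} symmetrizations preserve $S$ at a maximizer. First deduce $d(u)=d(v)$ whenever $u\not\sim v$ (from $|E(G_{u\to v})|\le |E(G)|$ and $|E(G_{v\to u})|\le |E(G)|$). If non-adjacency is not transitive, take a triple $u\not\sim v\not\sim w$ with $u\sim w$; then $G_{u\to v}$ is an $S$-maximizer, in it $w\not\sim v$ still holds, so $(G_{u\to v})_{w\to v}$ is again an $S$-maximizer, and a direct count (using $d(u)=d(v)=d(w)$ and that $w$ loses the edge to $u$ in the first step) gives $|E((G_{u\to v})_{w\to v})|=|E(G)|+1$, contradicting the choice of $G$. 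With this double-move in place your reduction to complete multipartite graphs is complete, and your Step~1 finishes the proof.
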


 Theorem \ref{turan} is referred to as a localized version of the Tur\'{a}n  theorem  since the weighted function only depends on the structure of each edge.  The technique of weighted function has also been used to study the Ramsey-Tur\'{a}n type  problems (see \cite{balogh2022} and \cite{balogh2023}).   In 2023, Malec and Tompkins \cite{malec2023localized} obtained a local version of the Erd\H{o}s-Gallai theorem  and proved the following  result.
\begin{Theorem}[Malec and Tompkins \cite{malec2023localized}]
	\label{thdm}
	 Let $G$ be an $n$-vertex simple  graph. Then
	\begin{equation}
		\sum_{e\in E(G)}\frac{2}{p(e)}\leq n,
\end{equation}
where $p(e)$ is the maximum $k$ such that $e$ occurs in a subgraph of $G$ isomorphic to $P_k$.
		Moreover, the equality holds if and only if each component is a clique.
\end{Theorem}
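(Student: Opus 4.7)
The plan is to prove by induction on $n$ that for every \emph{connected} graph $G$ on $n$ vertices, $\sum_{e \in E(G)} 2/p(e) \le n$, with equality if and only if $G \cong K_n$; the theorem then follows by summing over components. The cases $n \le 2$ are immediate. For the inductive step I fix a longest path $P = v_0 e_1 v_1 \cdots e_\ell v_\ell$ of length $\ell$ in $G$ and observe that, by maximality, every neighbor of $v_0$ and of $v_\ell$ lies on $V(P)$. The argument then splits on the classical P\'osa-style dichotomy according to whether $\deg(v_0) + \deg(v_\ell) \le \ell$.

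If $\deg(v_0) + \deg(v_\ell) \ge \ell + 1$, pigeonhole on the index sets $\{i : v_0 v_{i+1} \in E\}$ and $\{i : v_\ell v_i \in E\}$, both contained in $\{0, 1, \dots, \ell - 1\}$, yields an index $i$ with $v_0 v_{i+1}, v_\ell v_i \in E(G)$, producing the cycle $v_0 v_1 \cdots v_i v_\ell v_{\ell - 1} \cdots v_{i+1} v_0$ of length $\ell + 1$ spanning $V(P)$. If $V(P) \neq V(G)$, connectedness of $G$ supplies a vertex outside $V(P)$ attached to this cycle, which yields a path of length $\ell + 1$, a contradiction; hence $V(P) = V(G)$ and $G$ contains a Hamilton cycle. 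Every edge then lies in some Hamilton path: a cycle edge by deleting one other cycle edge, and a chord $u_i u_{i+k}$ via the explicit Hamilton path $u_{i+k-1} u_{i+k-2} \cdots u_{i+1} u_i u_{i+k} u_{i+k+1} \cdots u_{i-1}$. Consequently $p(e) = n-1$ for every $e \in E(G)$, and
\begin{equation*}
\sum_{e \in E(G)} \frac{2}{p(e)} \;=\; \frac{2\,|E(G)|}{n - 1} \;\le\; \frac{2 \binom{n}{2}}{n - 1} \;=\; n,
\end{equation*}
with equality precisely when $G \cong K_n$.

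Otherwise $\deg(v_0) + \deg(v_\ell) \le \ell$, and I may assume $d := \deg(v_\ell) \le \ell/2$. A rotation of $P$ shows that each edge $v_\ell v_i$ with $v_i \in N(v_\ell)$ lies on the length-$\ell$ path $v_0 v_1 \cdots v_i v_\ell v_{\ell - 1} \cdots v_{i+1}$, and combined with the bound $p(e) \le \ell$ this forces $p(v_\ell v_i) = \ell$ exactly, so the $d$ edges at $v_\ell$ contribute in total $2d/\ell \le 1$. Since $N(v_\ell) \subseteq V(P) \setminus \{v_\ell\}$ and the latter set is connected via the sub-path $v_0 v_1 \cdots v_{\ell - 1}$, $v_\ell$ is not a cut vertex and $G - v_\ell$ remains a connected graph on $n - 1$ vertices. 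For every remaining edge $e$, removing a vertex can only shorten the longest path through $e$, so $p_{G - v_\ell}(e) \le p_G(e)$; invoking the induction hypothesis on $G - v_\ell$ then gives
\begin{equation*}
\sum_{e \in E(G)} \frac{2}{p_G(e)} \;\le\; \sum_{e \in E(G - v_\ell)} \frac{2}{p_{G - v_\ell}(e)} \;+\; \frac{2d}{\ell} \;\le\; (n - 1) + 1 \;=\; n.
\end{equation*}
Equality here would require $G - v_\ell \cong K_{n-1}$, $p_G(e) = p_{G - v_\ell}(e) = n - 2$ for every edge $e$ of this $K_{n-1}$, and $d = \ell/2$; but any neighbor $w$ of $v_\ell$ in the clique, together with a Hamilton path of $K_{n-1}$ starting at $w$ and containing $e$, produces a Hamilton path of $G$ of length $n - 1$ through $e$, contradicting $p_G(e) = n - 2$. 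Hence this sub-case is strict, and the equality characterization follows.

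The main obstacle is establishing the degree-sum bound $\deg(v_0) + \deg(v_\ell) \le \ell$ in the non-Hamiltonian sub-case, together with the companion structural fact that in the Hamiltonian sub-case \emph{every} edge, including every chord, lies in a Hamilton path. Both rest on careful rotations of $P$ and the elementary P\'osa pigeonhole; once they are in place the inductive step closes cleanly and the equality characterization is forced by tracking when the two inequalities $p_{G - v_\ell}(e) \le p_G(e)$ and $2d/\ell \le 1$ are simultaneously tight.
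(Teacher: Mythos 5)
Your proof is correct, but note that the paper itself contains no proof of this statement: Theorem~\ref{thdm} is quoted from Malec and Tompkins, so there is no in-paper argument to compare line by line. Your route --- take a longest path $P=v_0v_1\cdots v_\ell$, split on whether $\deg(v_0)+\deg(v_\ell)\ge \ell+1$; in the first case use the P\'osa pigeonhole on $\{i:v_0v_{i+1}\in E\}$ and $\{i:v_\ell v_i\in E\}$ to get a spanning cycle of length $\ell+1$, conclude $V(P)=V(G)$, and show every edge (chords included, via your rerouted Hamilton path) has $p(e)=n-1$, giving $2|E(G)|/(n-1)\le n$ with equality only for $K_n$; in the second case delete the endpoint of degree $d\le \ell/2$, whose incident edges all have $p(e)=\ell$ and contribute $2d/\ell\le 1$, and apply induction to the connected graph $G-v_\ell$, with the equality analysis showing this case is always strict --- is sound and complete. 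It is in fact the $r=2$ specialization of the machinery the paper develops for the hypergraph version: your single endpoint with $|N(v_\ell)|\le \ell/2$ is a one-element good set in the sense of the paper's $|N(S)|\le f_r(k)|S|$, your Hamiltonian case is the paper's $n=k+1$ situation handled by Lemma~\ref{le4} (a spanning cycle of length $k+1$ forces $p(e)=k$ for all edges), your index pigeonhole is exactly the claim $I\cap(J+1)\neq\emptyset$ in the proof of Theorem~\ref{thy} (with Lemma~\ref{le1} playing the role of your rotations), and the delete-and-induct step with the monotonicity $p_{G-v_\ell}(e)\le p_G(e)$ mirrors Section 4's proof of Theorem~\ref{th3conn}; what your argument buys is that in the graph case none of the heavier good-set bookkeeping is needed. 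One cosmetic caveat, inherited from the statement itself rather than from your proof: an isolated vertex ($K_1$ component) adds $1$ to $n$ but nothing to the sum, so the equality characterization requires the usual convention excluding such components; this does not affect your induction, since for the one-vertex base case you only ever use the inequality direction.
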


 Motivated by the  Erd\H{o}s-Gallai theorem for hypergraphs and the localized version  for simple graphs,  we consider a localized version of   the  Erd\H{o}s-Gallai theorem for hypergraphs.

This paper is organized as follows. In Section 2, the main results and corollaries of the paper have been presented after some notations and symbols are introduced. In Section 3, two key theorems on the structure of hypergraphs involving a weighted function have been proved, which are used to give a proof of the main results in Section 4.

\section{Notations and main results}

Let $\mathcal{H}=(\mathcal{V},\mathcal{E})$  be a hypergraph with vertex set $\mathcal{V}$  and edge set $\mathcal{E}$  which is  the family of non-empty subsets of $\mathcal{V}$.  If  each edge $e\in \mathcal{E}$  is a  $r$-element subset of $\mathcal{V}$, then  $\mathcal{H}$  is called an $r$-uniform hypergraph, or simply $r$-graph. A hypergraph $\mathcal{H}$ is said to be connected if for each pair of distinct vertices $u,v\in V(\mathcal{H})$, there exists a Berge path whose terminal vertices  are $u$ and $v$.   Moreover, if
$S\subseteq V(\mathcal{H})$ and $\mathcal{F}\subseteq E(\mathcal{H})$, denote  $N_{\mathcal{F}}(S)$  by  $\{e\in\mathcal{F}:e\cap S\neq\emptyset\}$, and the subgraph $\mathcal{H}\setminus S$ by $ (V(\mathcal{H})\setminus S,\{e\in E(\mathcal{H}): e\cap S=\emptyset\})$, respectively.
If there are no ambiguous, we use $N(S)$ instead of $N_{E(\mathcal{H})}(S)$, and $N_{\mathcal{F}}(v)$ instead of $N_{\mathcal{F}}(\{v\})$ for convenience.
It is easy to see that  $N_{\mathcal{F}}(A\cup B)=N_{\mathcal{F}}(A)\cup N_{\mathcal{F}}(B)$ for $A,B\subseteq V(\mathcal{H})$, and  $\{E(\mathcal{H}\setminus S),N(S)\}$ is an edge partition of $E(\mathcal{H})$.

For an $r$-uniform hypergraph $\mathcal{H}$, define a weight on  each edge $e\in E(\mathcal{H})$,
\begin{equation*}
	p_{\mathcal{H}}(e):=\max\{k: e\in E(P) \text{ for some  Berge path} \ P \ \text{ of length}\ k  \ \text{ in }\mathcal{H}\}.
\end{equation*}
Moreover, for a given $r\ge 3$, we define  a function with respect to  real $x\ge 1$ as follows.
\begin{equation}
	f_r(x)=\left\{
	\begin{array}{lll}
		&\frac{1}{r}, & {\rm if }\  x=1,\\
		&\frac{x}{r+1}, & {\rm if} \ 1<x\leq r-1,\\
		&\frac{1}{r}\binom{x}{r-1}, & {\rm if} \ x\geq r.
	\end{array}
	\right.
\end{equation}
It is easy to see that the function $f_r(x)$ is strictly increasing with respect to $x$  if  $r\ge 3$.  For each edge $e$ of an $r$-uniform hypergraph $\mathcal{H}$,  the weighted function on $e$ is defined to be $f_r(p_{\mathcal{H}}(e))$. In this paper,   the main result of this paper can be stated as follows.

\begin{Theorem}
	\label{th3conn}
	Let $\mathcal{H}$ be an  $n$-vertex, $r$-uniformly connected hypergraph with $n\ge r\ge 3$. Then
	\begin{equation}
		\label{eq0}
		\sum_{e\in E(\mathcal{H})}\frac{1}{f_r(p_{\mathcal{H}}(e))}\leq n.
	\end{equation}
	Moreover,  equality holds if and only if one of the following holds:
	\begin{enumerate}[(i)]
		\item $n=r+1$ and $|E(\mathcal{H})|\in \{1, 2, \ldots, r-1, r+1\}$
		\item $n\neq r+1$ and $\mathcal{H}$ is an $n$-vertex, $r$-uniform complete hypergraph, i.e., $|E(\mathcal{H})|=\binom{n}{r}$ for $n\neq r+1$.
	\end{enumerate}
\end{Theorem}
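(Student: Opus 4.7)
The plan is to prove the bound \eqref{eq0} by induction on the number of vertices $n$, with base case $n=r$: connectedness forces $\mathcal{H}=K_r^r$, a single edge with $p_\mathcal{H}(e)=1$, giving $\sum 1/f_r(p(e))=1/(1/r)=r=n$. Before starting the induction it is useful to observe that the weight function $f_r$ is precisely calibrated so that $n\cdot f_r(\ell)=ex_r(n,\mathcal{BP}_{\ell+1})$ in every regime $\ell\ge 1$: this is immediate for $\ell\le r-1$ and for $\ell=r$ (using Theorem \ref{th0}), and follows from the identity $\binom{\ell+1}{r}/(\ell+1)=\binom{\ell}{r-1}/r$ for $\ell\ge r+1$ (combined with Theorem \ref{th1}). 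Consequently, the ``uniform" case in which every edge satisfies $p(e)=\ell$ reduces directly to the appropriate case of Theorem \ref{th1} or Theorem \ref{th0}.

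For the inductive step with mixed $p$-values, let $P=v_0e_1v_1\cdots e_\ell v_\ell$ be a longest Berge path in $\mathcal{H}$. By maximality of $P$, any edge $e\ni v_0$ with $e\notin\{e_1,\ldots,e_\ell\}$ must satisfy $e\subseteq V(P)$ (else $P$ could be extended through $v_0$), and the analogous statement holds at $v_\ell$; this tightly constrains the local structure near the endpoints and near $\cup E(P)$. I would then attempt to delete either an endpoint vertex ($v_0$ or $v_\ell$) or a vertex outside $V(P)$, and carefully track the change in the weighted sum. The key is that removing a vertex $v^\ast$ destroys only the $\deg(v^\ast)$ incident edges and at worst decreases $p(e)$ for surviving edges (increasing their weight), and the structural constraint above gives quantitative control on both effects, so the net loss is at most one unit of weight, closing the induction $n\mapsto n-1$.

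The principal obstacle is that a direct Abel-summation approach using $A_k:=|\{e:p(e)\le k\}|\le n\,f_r(k)$ does \emph{not} suffice: the family of Erd\H{o}s--Gallai bounds cannot be simultaneously tight across all layers $k$, yet naive telescoping treats them as if they could, producing an upper bound strictly larger than $n$. Overcoming this is presumably what the two structural theorems of Section 3 accomplish, by establishing a rigidity/``peeling" statement: if $\mathcal{H}$ is close to extremal at some layer, then it contains a distinguished sub-hypergraph (likely on $V(P)$ or $\cup E(P)$) with a highly restricted form that allows the induction to close with exact accounting. Particular care will be needed at the transition $\ell=r$, where $f_r$ jumps from the linear piece $x/(r+1)$ to the binomial piece $\binom{x}{r-1}/r$ and where the $n=r+1$ exceptional equality cases appear. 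For the equality characterization, the forward direction is verified by direct computation: $K_n^r$ uses $\binom{n}{r}\cdot r/\binom{n-1}{r-1}=n$, while for $n=r+1$ each case $|E|\le r-1$ forces $p(e)=|E|$ (summing to $r+1$) and $|E|=r+1$ forces $p(e)=r$ with weight $1/f_r(r)=1$ per edge (summing to $r+1$); the exclusion of $|E|=r$ is explained by that case giving sum $r\ne r+1$. The converse follows by tracking when each inequality in the induction and in the structural theorems is tight.
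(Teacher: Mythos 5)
Your overall architecture (induction on $n$ with base case $n=r$, deleting vertices and using monotonicity of $f_r$ so that surviving edges are handled by the induction hypothesis) matches the paper, and your equality computations for the forward direction, including the exclusion of $|E(\mathcal{H})|=r$ when $n=r+1$, are correct. But the inductive step as you state it has a genuine gap: deleting a \emph{single} vertex $v^\ast$ and claiming ``the net loss is at most one unit of weight'' is exactly the assertion that $|N(v^\ast)|\le f_r(k)$ for a vertex all of whose incident edges lie on longest Berge paths, i.e.\ that there is a \emph{good set of size one}. This is false in general. For instance, in $\mathcal{K}_n^r$ every vertex meets $\binom{n-1}{r-1}$ edges, each of weight $r/\binom{n-1}{r-1}$, so deleting any vertex destroys weight exactly $r>1$; and whenever the longest path has length $k\le r-1$, a single incident edge already carries weight $(r+1)/k>1$. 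So the accounting $n\mapsto n-1$ cannot close, and no choice among $v_0$, $v_\ell$, or a vertex outside $V(P)$ rescues it. The whole difficulty of the theorem is precisely the construction of a multi-vertex set $S$ with $p_{\mathcal{H}}(e)=k$ for all $e\in N(S)$ and $|N(S)|\le f_r(k)|S|$ (sets of size $r-1$, $r$, $r+1$, or all of $V(\mathcal{H})$), which the paper obtains through Lemma \ref{le3} and the endpoint-rotation argument of Lemma \ref{le1}, culminating in Theorems \ref{thx} ($k\le r$) and \ref{thy} ($k>r$). You correctly sense that ``the two structural theorems of Section 3'' must supply what is missing, but you do not supply any of that content, and the concrete step you do propose would fail.

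Two further remarks. First, your calibration observation $n f_r(\ell)=ex_r(n,\mathcal{BP}_{\ell+1})$ is true but plays no logical role in the paper's proof; in fact the paper goes the other way, deriving Theorems \ref{th1} and \ref{th0} as corollaries of the localized statement, so one should not invoke them circularly in the uniform case. Second, the converse of the equality characterization is not a routine ``track when inequalities are tight'': the paper needs a separate claim that in the equality case the only good set is $V(\mathcal{H})$, proved via Lemma \ref{le6} and Corollary \ref{coro-path} (extending a longest path of a component of $\mathcal{H}\setminus S$ through an edge meeting $S$), followed by a case analysis on $k$ versus $r$ using Theorems \ref{thx} and \ref{thy} again; your proposal leaves all of this unargued.
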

The proof of Theorem \ref{th3conn} will be postponed in Section 4.  By Theorem \ref{th3conn},  we can deduce the following results   and present a new proof of Theorems \ref{th1} and \ref{th0}.

\begin{Corollary}
	\label{th3}
	Let $\mathcal{H}$ be an $n$-vertex, $r$-uniform hypergraph. Then
	\begin{equation}
		\label{eq00}
		\sum_{e\in E(\mathcal{H})}\frac{1}{f_r(p_{\mathcal{H}}(e))}\leq n.
	\end{equation}
	Moreover, equality holds if and only if each component $\mathcal{C}$ of $\mathcal{H}$ satisfies one of the following:
	\begin{enumerate}[(i)]
		\item $|V(\mathcal{C})|=r+1$ and $|E(\mathcal{C})|\in \{1, 2,\ldots, r-1, r+1\}$.
		\item $|V(\mathcal{C})|\neq r+1$ and $\mathcal{C}$ is an $r$-uniform complete hypergraph. 
	\end{enumerate}
\end{Corollary}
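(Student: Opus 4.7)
The plan is to reduce the corollary to the connected case, Theorem \ref{th3conn}, via a component-by-component argument. Let $\mathcal{C}_1,\ldots,\mathcal{C}_t$ be the connected components of $\mathcal{H}$, so that $V(\mathcal{H})=\bigcup_{i} V(\mathcal{C}_i)$ and $E(\mathcal{H})=\bigcup_{i} E(\mathcal{C}_i)$ as disjoint unions. The first step is to verify that for any edge $e\in E(\mathcal{C}_i)$, every Berge path of $\mathcal{H}$ containing $e$ is entirely contained in $\mathcal{C}_i$, because consecutive defining edges of a Berge path share a defining vertex and hence all lie in the same component as $e$. Consequently, $p_{\mathcal{H}}(e)=p_{\mathcal{C}_i}(e)$ whenever $e\in E(\mathcal{C}_i)$, so the weights appearing in \eqref{eq00} can be computed componentwise.

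Next I would split the components by size. If $|V(\mathcal{C}_i)|<r$, then $\mathcal{C}_i$ has no edges at all (each edge requires $r$ distinct vertices), so the contribution of $\mathcal{C}_i$ to the left-hand side of \eqref{eq00} is $0\leq |V(\mathcal{C}_i)|$. Otherwise $|V(\mathcal{C}_i)|\geq r$ and $\mathcal{C}_i$ is a connected $r$-uniform hypergraph on at least $r\geq 3$ vertices, to which Theorem \ref{th3conn} directly applies, yielding
\[
\sum_{e\in E(\mathcal{C}_i)}\frac{1}{f_r(p_{\mathcal{C}_i}(e))}\leq |V(\mathcal{C}_i)|.
\]
Adding these per-component inequalities and using the identity $p_{\mathcal{H}}(e)=p_{\mathcal{C}_i}(e)$ gives
\[
\sum_{e\in E(\mathcal{H})}\frac{1}{f_r(p_{\mathcal{H}}(e))}=\sum_{i=1}^{t}\sum_{e\in E(\mathcal{C}_i)}\frac{1}{f_r(p_{\mathcal{C}_i}(e))}\leq \sum_{i=1}^{t}|V(\mathcal{C}_i)|=n,
\]
which is the desired bound \eqref{eq00}.

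For the equality characterization, equality in the overall bound forces equality in each per-component bound. A component $\mathcal{C}_i$ with $|V(\mathcal{C}_i)|<r$ contributes $0$ to the left-hand side but a positive amount to the right-hand side, so no such component can occur when equality holds. Every remaining component is connected with at least $r$ vertices and saturates Theorem \ref{th3conn}, which is precisely the dichotomy (i)--(ii) of the corollary (and conversely, if every component has this form, equality in Theorem \ref{th3conn} holds on each $\mathcal{C}_i$ and we recover equality after summing). I do not expect a significant obstacle here: the entire argument is a clean reduction, and the only delicate point is the componentwise equality $p_{\mathcal{H}}(e)=p_{\mathcal{C}_i}(e)$, which ensures that the local and global versions of the weighted sum agree edge by edge.
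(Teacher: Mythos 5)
Your proposal is correct and follows essentially the same route as the paper: decompose $\mathcal{H}$ into components, apply Theorem \ref{th3conn} to each, sum, and note that equality forces equality componentwise. You are in fact slightly more careful than the paper in making explicit that $p_{\mathcal{H}}(e)=p_{\mathcal{C}_i}(e)$ for $e\in E(\mathcal{C}_i)$ and in handling components with fewer than $r$ vertices, which the paper leaves implicit.
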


\begin{Proof}
 Let $\mathcal{C}_1,\mathcal{C}_2,\dots,\mathcal{C}_t$ be the components of $\mathcal{H}$.
By Theorem \ref{th3conn},  for $i=1, 2, \ldots, t$,
\begin{equation}
	\label{cpnt}
	\sum_{e\in E(\mathcal{C}_i)}\frac{1}{f_r(p_{\mathcal{H}}(e))}\leq |V(\mathcal{C}_i)|. 
\end{equation}
Hence,
\begin{equation}
	\label{cp}
	\sum_{e\in E(\mathcal{H})}\frac{1}{f_r(p_{\mathcal{H}}(e))}=\sum_{i=1}^{t}\sum_{e\in E(\mathcal{C}_i)}\frac{1}{f_r(p_{\mathcal{H}}(e))}\leq \sum_{i=1}^{t}|V(\mathcal{C}_i)|=n.
\end{equation}
So \eqref{eq00} holds. Moreover,  equality \eqref{eq00} holds if and only if  equality  \eqref{cpnt}  holds.  Therefore the assertion holds.
\end{Proof}

\begin{Corollary}\cite{ davoodi2018erdHos,gyHori2016hypergraph}\label{cor3}
Let $n, r, k$ be three positive integers.

(1). If  $n\geq r\geq k> 2$, then $ex_r(n,\mathcal{BP}_k)\leq \frac{n}{r+1}(k-1)$.

(2). If $n\geq k\ge r+1\geq 3$, then  $ex_r(n,\mathcal{BP}_k)\leq \frac{n}{k}\binom{k}{r}$.

\end{Corollary}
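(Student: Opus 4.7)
The plan is to derive Corollary \ref{cor3} directly from Corollary \ref{th3} using the monotonicity of $f_r$. Let $\mathcal{H}$ be an arbitrary $n$-vertex, $r$-uniform hypergraph that contains no Berge path of length $k$. By definition of $p_{\mathcal{H}}$, every edge $e\in E(\mathcal{H})$ then satisfies $p_{\mathcal{H}}(e)\leq k-1$, and because $f_r$ is strictly increasing on $[1,\infty)$, this gives $f_r(p_{\mathcal{H}}(e))\leq f_r(k-1)$, equivalently $\tfrac{1}{f_r(p_{\mathcal{H}}(e))}\geq\tfrac{1}{f_r(k-1)}$ for every edge of $\mathcal{H}$.

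Summing this edge-by-edge lower bound and invoking Corollary \ref{th3} I obtain
\begin{equation*}
\frac{|E(\mathcal{H})|}{f_r(k-1)}\;\leq\;\sum_{e\in E(\mathcal{H})}\frac{1}{f_r(p_{\mathcal{H}}(e))}\;\leq\;n,
\end{equation*}
so $|E(\mathcal{H})|\leq n\cdot f_r(k-1)$. Taking the supremum over all $\mathcal{BP}_k$-free hypergraphs yields $ex_r(n,\mathcal{BP}_k)\leq n\cdot f_r(k-1)$; the only remaining task is to identify which branch of the piecewise definition of $f_r$ applies to $x=k-1$ in each case.

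In case (1), the hypothesis $n\geq r\geq k>2$ forces $2\leq k-1\leq r-1$, so $k-1$ lies in the middle branch and $f_r(k-1)=\tfrac{k-1}{r+1}$, giving $ex_r(n,\mathcal{BP}_k)\leq \tfrac{n(k-1)}{r+1}$. In case (2), $k\geq r+1$ forces $k-1\geq r$, so $k-1$ lies in the bottom branch and $f_r(k-1)=\tfrac{1}{r}\binom{k-1}{r-1}$; the standard identity $\binom{k-1}{r-1}=\tfrac{r}{k}\binom{k}{r}$ then rewrites the bound as $\tfrac{n}{k}\binom{k}{r}$, matching the stated inequality and also absorbing the boundary case $k=r+1$ from Theorem \ref{th0}.

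Since the whole derivation is a two-step application of Corollary \ref{th3}, there is no real obstacle at this stage; all of the substantive work has already been absorbed into Theorem \ref{th3conn}, which is precisely what makes the localized formulation strong enough to recover the classical Erd\H{o}s--Gallai type bounds for Berge paths in a unified way.
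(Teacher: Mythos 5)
Your proposal is correct and is essentially identical to the paper's own proof: both deduce the bound $|E(\mathcal{H})|\leq n\,f_r(k-1)$ from Corollary \ref{th3} via $p_{\mathcal{H}}(e)\leq k-1$ and the monotonicity of $f_r$, then evaluate the appropriate branch of $f_r$ (using $\binom{k-1}{r-1}=\tfrac{r}{k}\binom{k}{r}$ in case (2)). The only omission is the boundary case $r=2$ allowed in part (2), where $f_r$ and Corollary \ref{th3} are not available (they require $r\geq 3$); the paper handles it in one line by invoking the classical Erd\H{o}s--Gallai theorem, and you should add the same remark.
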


\begin{Proof} If $r=2,$ the assertion follows from Erd\H{o}s-Gallai Theorem. So we assume that $r\ge 3$ and let $\mathcal{H}$ be  an $n$-vertex, $r$-uniform  hypergraph which does not contain any Berge path of length $k$.  Then  $p_{\mathcal{H}}(e)\leq {k-1}$ for each $e\in E(\mathcal{H})$.  Since $f_r(x)$  is strictly increasing with respect to $x$,  $f_r(p_{\mathcal{H}}(e))\leq f_r(k-1)$ for each $e\in E(\mathcal{H})$. By Corollary \ref{th3}, we have
\begin{equation}
	\frac{|E(\mathcal{H})|}{f_r(k-1)}=\sum_{e\in E(\mathcal{H})}\frac{1}{f_r(k-1)}\leq\sum_{e\in E(\mathcal{H})}\frac{1}{f_r(p_{\mathcal{H}}(e))}\leq n. \nonumber
\end{equation}
Hence $|E(\mathcal{H})|\leq n f_r(k-1)$ and  
$$ex_r(n,\mathcal{BP}_k)\leq nf_r(k-1)=\left\{ \begin{array}{ll}
\frac{n(k-1)}{r+1}, & 2<k\le r\le n,\\
\frac{n}{k}\binom{k}{r}, & n\ge k\ge r+1.\end{array}\right.
$$
So  the assertion holds.
\end{Proof}

\section{ Two key technique Theorems }

In order to prove  Theorem \ref{th3conn}, we  only consider $r\ge 3$  in later and need the following definition.
\begin{Definition}
	Let $\mathcal{H}$ be an $n$-vertex, $r$-uniform hypergraph with a longest Berge path of length $k$.
A  nonempty subset $S$ of $V(\mathcal{H})$ is said to be  {\it good} if $p_{\mathcal{H}}(e)=k$ for each $e\in N(S)$ and $|N(S)|\le f_r(k)|S|$. Moreover
denote by $\mathcal{S}_r(\mathcal{H})$ the family of all good sets of $V(\mathcal{H})$, i.e.,
	\begin{equation}
		\mathcal{S}_r(\mathcal{H}):=\{S\subseteq V(\mathcal{H}): S\neq\emptyset; \forall e\in N(S),p_{\mathcal{H}}(e)=k; |N(S)|\leq f_r(k)|S|\}.
	\end{equation}
\end{Definition}
For example,  there is at least a good set  $V(\mathcal{K}_n^r)$ in  an $n$-vertex, $r$-uniform complete hypergraph $\mathcal{K}_n^r$. Because it is easy to see that
$p_{\mathcal{H}}(e)=n-1$ for each $e\in N(V(\mathcal{K}_n^r))$ and  $|N(V(\mathcal{K}_n^r))|=|E(\mathcal{K}_n^r)|=\binom{n}{r}= f_r(n-1)|V(\mathcal{K}_n^r)|$, where $k=n-1$.
In fact,  we will prove in this section that there is at least a good set $S$ in an $n$-vertex, $r$-uniform hypergraph $\mathcal{H}$  with the longest Berge path of length $k$ for $r\ge 3$, i.e., $\mathcal{S}_r(\mathcal{H})\neq \emptyset$.

\begin{Lemma}
	\label{le4}
	Let $\mathcal{H}$ be a connected  hypergraph with a longest Berge path of length $k$.  If there exists a Berge cycle $C$ of length  $k+1$, then $V(\mathcal{H})= V (C)$. Moreover,  $p_{\mathcal{H}}(e)=k$ for each $ e\in E(\mathcal{H})$.
\end{Lemma}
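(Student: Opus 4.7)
My plan is to prove both assertions by contradiction: in each case I will assume the conclusion fails and then construct a Berge path of length $k+1$, contradicting the hypothesis that the longest Berge path in $\mathcal{H}$ has length $k$. Fix notation for the given $(k+1)$-cycle once and for all as $C: v_0 e_1 v_1 e_2 \dots v_k e_{k+1} v_0$, with subscripts on both vertices and edges read modulo $k+1$. Note that removing any single defining edge $e_i$ of $C$ already produces a Berge path of length $k$ on the defining vertices $V(C)$, which will be the basic template for every extension argument.

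For the first assertion, the key preliminary fact I would establish is $\cup E(C)\subseteq V(C)$. If some defining edge $e_i$ contained a vertex $u\notin V(C)$, delete $e_i$ from $C$ to get a Berge path of length $k$ from $v_i$ to $v_{i-1}$ using only vertices of $V(C)$, then append the edge $e_i$ (now unused) and the new vertex $u$ at the $v_{i-1}$ end; this yields a Berge path of length $k+1$, a contradiction. Next, suppose toward a contradiction that there is some $u\in V(\mathcal{H})\setminus V(C)$. Using connectivity, choose a Berge path $u=u_0, f_1, u_1,\dots,f_m,u_m$ with $u_m\in V(C)$ of minimum length; the minimality forces $u_0,\dots,u_{m-1}\notin V(C)$. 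Combined with $\cup E(C)\subseteq V(C)$, this in turn forces every $f_j\notin E(C)$, since otherwise $u_{j-1}\in f_j\subseteq \cup E(C)\subseteq V(C)$. Writing $u_m=v_j$, take the Berge path of length $k$ obtained from $C$ by deleting $e_{j+1}$, which ends at $v_j$, and extend it by the unused edge $f_m$ to the new vertex $u_{m-1}$; this produces a Berge path of length $k+1$, the desired contradiction, so $V(\mathcal{H})=V(C)$.

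For the second assertion, fix an arbitrary $e\in E(\mathcal{H})$; since the longest Berge path in $\mathcal{H}$ has length $k$, one has $p_{\mathcal{H}}(e)\leq k$, so it suffices to exhibit a Berge path of length exactly $k$ that contains $e$. If $e=e_m$ is itself a defining edge of $C$, deleting any other cycle edge does the job. Otherwise $e\notin E(C)$, and since $V(\mathcal{H})=V(C)$ we have $e\subseteq V(C)$; as $r\geq 3$ we may pick two distinct defining vertices $v_a,v_b\in e$. Removing the two cycle edges $e_{a+1}$ and $e_{b+1}$ from $C$ and then inserting $e$ to bridge $v_a$ and $v_b$ assembles a Berge path on all $k+1$ defining vertices using $k-1$ cycle edges together with $e$, which has length $k$ and contains $e$.

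I expect the main obstacle to be the minimality argument in the second step of Part 1: once the outside vertex $u$ is allowed to reach $V(C)$ only through an internal detour, I must rule out that the connecting edges $f_j$ secretly belong to $E(C)$, since only then does the extension genuinely produce a longer Berge path. This is precisely where the preliminary fact $\cup E(C)\subseteq V(C)$ is essential. A minor technicality in Part 2 is to verify the construction in the boundary case where $v_a$ and $v_b$ are adjacent on $C$, but in that case one of the two residual arcs of $C$ is simply a single vertex and the construction still yields a Berge path of length $k$ containing $e$.
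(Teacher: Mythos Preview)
Your proposal is correct and follows essentially the same approach as the paper's proof: first show $\cup E(C)\subseteq V(C)$ by extending the cycle-minus-one-edge path through a stray vertex of some $e_i$, then use connectivity and a shortest connecting path to rule out vertices outside $V(C)$, and finally handle $e\in E(C)$ and $e\notin E(C)$ separately by deleting one or two cycle edges. Your treatment is in fact slightly more careful than the paper's in explicitly verifying that the connecting edges $f_j$ lie outside $E(C)$ (via $\cup E(C)\subseteq V(C)$), a point the paper leaves implicit; the only minor quibble is that the lemma as stated does not assume $r$-uniformity, so your appeal to ``$r\ge 3$'' should simply be ``$|e|\ge 2$''.
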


\begin{proof} Let $C=v_0 e_1 v_1 e_2 v_2 \dots v_k e_{k+1} v_0$ be a Berge cycle of length  $k+1$. Then   $e_1 \cup e_2 \cup \dots \cup e_{k+1} \subseteq V(C)=\{v_0, v_1,\ldots, v_k\}$. In fact, if there exists a vertex $u\in e_i$ and $u\notin V(C)$ for $1\le i\le k$, then $ue_iv_{i+1}e_{i+1}\cdots e_{k+1}v_0e_1v_1\cdots e_{i-1}v_{i-1}$ is a Berge path of length $k+1$, which is a contradiction.  Furthermore, if there exists a vertex $w\in  V(\mathcal{H})$ and $w\notin V(C)$, then there exists a Berge path $Q=wh_1u_2\cdots u_{l-1}h_lv_i$ in $\mathcal{H}$ such that $u_2, \ldots, u_{l-1}\notin V(C)$ and $v_i\in V(C)$, since $\mathcal{H}$ is connected. Then   there is  a Berge path $W=wh_1u_2\cdots u_{l-1}h_lv_ie_{i+1}v_{i+1}\cdots v_ke_{k+1}v_0e_1\cdots v_{i-1}$ of length $k+1$  in $\mathcal{H}$, which is a contradiction. Hence
	 $V(\mathcal{H})=V(C)$.

In addition,  since $v_{i-1} e_i v_i e_{i+1} v_{i+1} \dots v_k e_{k+1} v_0 e_1 \dots e_{i-2} v_{i-2}$ is a Berge path of $\mathcal{H}$ containing $e_i$,  $p_{\mathcal{H}}(e_i)=k$ for $i=1, \ldots,k+1.$  For $e \in E(\mathcal{H})\setminus E(C)$, there exist two vertices $v_i, v_j\in e$ since  $V(\mathcal{H})=V(C)$. Hence there is a Berge path $v_{j+1} e_{j+2} \dots e_{k+1} v_0 e_1 v_1 \dots v_{i-1} e_i v_i e v_j e_j v_{j-1} \dots v_{i+2} e_{i+2} v_{i+1} $ of length $k$. Hence $p_{\mathcal{H}}(e_i)$ $=k$. So the assertion holds.
	\end{proof}

\begin{Corollary}
	\label{le5}
	Let $\mathcal{H}$ be an $n$-vertex, $r$-uniformly connected  hypergraph with the longest Berge path of length $r$. If $\mathcal{H}$ contains a Berge cycle $C$ of length $r+1$, then $\mathcal{H}$ must be an $(r+1)$-vertex complete hypergraph.
\end{Corollary}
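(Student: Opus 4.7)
The plan is to apply Lemma \ref{le4} to cut the hypergraph down to exactly $r+1$ vertices, and then finish by a pigeonhole counting of $r$-subsets. Since the longest Berge path in $\mathcal{H}$ has length $k=r$ and $\mathcal{H}$ contains a Berge cycle $C$ of length $r+1=k+1$, Lemma \ref{le4} applies directly and yields $V(\mathcal{H})=V(C)$. Because a Berge cycle of length $r+1$ has $r+1$ pairwise distinct defining vertices, this immediately forces $n=|V(\mathcal{H})|=r+1$.

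With $n=r+1$ pinned down, I would then count edges. On one hand, $\mathcal{H}$ is $r$-uniform on an $(r+1)$-vertex ground set, and the number of $r$-subsets of an $(r+1)$-set is $\binom{r+1}{r}=r+1$, so $|E(\mathcal{H})|\le r+1$. On the other hand, the defining edges $e_1,e_2,\ldots,e_{r+1}$ of the Berge cycle $C$ are, by the very definition of a Berge cycle, pairwise distinct elements of $E(\mathcal{H})$, giving $|E(\mathcal{H})|\ge r+1$. Combining the two bounds, $|E(\mathcal{H})|=r+1$ and the $e_i$'s exhaust every $r$-subset of $V(\mathcal{H})$. Therefore $\mathcal{H}$ is precisely the $(r+1)$-vertex, $r$-uniform complete hypergraph.

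There is essentially no real obstacle here, since Lemma \ref{le4} already performs the structural work of forcing $V(\mathcal{H})=V(C)$. The only point worth remarking is that each $e_i\subseteq V(C)$ automatically (this inclusion is established inside the proof of Lemma \ref{le4}), so each $e_i$ is indeed an $r$-subset of the $(r+1)$-element set $V(\mathcal{H})$ and the pigeonhole step above is legitimate. Everything else is arithmetic, and the corollary follows in a few lines.
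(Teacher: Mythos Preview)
Your proposal is correct and follows essentially the same argument as the paper: invoke Lemma~\ref{le4} to obtain $V(\mathcal{H})=V(C)$ and hence $n=r+1$, then sandwich $|E(\mathcal{H})|$ between $|E(C)|=r+1$ and $\binom{r+1}{r}=r+1$ to conclude completeness.
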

\begin{proof} Let   $C=v_0 e_1 v_1 e_2 v_2 \dots v_r e_{r+1} v_0$ be a Berge cycle of length  $r+1$. Then by Lemma \ref{le4},  $V(\mathcal{H})= V (C)=\{v_0, \ldots, v_r\}$. So $|V(\mathcal{H})|=r+1$. In addition, since $|E(\mathcal{H})|\leq\binom{|V(\mathcal{H})|}{r}=r+1$ and $|E(\mathcal{H})|\geq |E(C)|=r+1$, we obtain $|E(\mathcal{H})|=r+1=|\binom{V(\mathcal{H})}{r}|$. Hence $\mathcal{H}$ is  a complete hypergraph.
\end{proof}

\begin{Corollary}
	\label{l5c1}
	Let $\mathcal{H}$ be an $r$-uniformly connected  hypergraph with a longest Berge path of length  $r$. If there exists a Berge path  $P=v_0 e_1 v_1 e_2 v_2 \dots v_{r-1} e_r v_r$ such that $N(v_0)\setminus E(P) \neq \emptyset$, then $\mathcal{H}$ must be an $(r+1)$-vertex complete hypergraph.
\end{Corollary}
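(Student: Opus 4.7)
The plan is to produce a Berge cycle of length $r+1$ in $\mathcal{H}$ and then appeal to Corollary \ref{le5}. Fix an edge $e\in N(v_0)\setminus E(P)$; the whole task is to combine $e$ with edges of $P$ to close up a cycle of length $r+1$.

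The first step I would carry out is to show that every vertex of $e$ already lies in $V(P)$: if some $u\in e$ satisfied $u\notin V(P)$, then prepending $u$ and $e$ to $P$ would yield a Berge path $u\,e\,v_0\,e_1\,v_1\cdots e_r\,v_r$ of length $r+1$, contradicting the maximality of $P$. Since $|e|=r$ and $v_0\in e$, this observation forces $e$ to be obtained from $\{v_0,v_1,\ldots,v_r\}$ by deleting exactly one $v_j$ with $1\le j\le r$. If $v_r\in e$ (that is, $j\neq r$), then $v_0\,e_1\,v_1\,e_2\,v_2\cdots v_{r-1}\,e_r\,v_r\,e\,v_0$ is already a Berge cycle of length $r+1$, and Corollary \ref{le5} finishes the argument.

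The main obstacle is the remaining case $v_r\notin e$, in which $e=\{v_0,v_1,\ldots,v_{r-1}\}$. My idea here is to swap the roles of $e_1$ and $e$: the sequence
\[
P'\;=\;v_0\,e\,v_1\,e_2\,v_2\cdots e_r\,v_r
\]
is another Berge path of length $r$, and now $e_1\in N(v_0)\setminus E(P')$. Re-running the first step on $P'$ and $e_1$ forces $e_1\subseteq V(P')=V(P)$. If $v_r\notin e_1$, then $e_1\subseteq\{v_0,v_1,\ldots,v_{r-1}\}$ together with $|e_1|=r$ would give $e_1=e$, contradicting $e\notin E(P)$. Hence $v_r\in e_1$, so $v_0\,e\,v_1\,e_2\,v_2\cdots e_r\,v_r\,e_1\,v_0$ is a Berge cycle of length $r+1$, and a final application of Corollary \ref{le5} completes the proof.
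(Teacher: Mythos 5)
Your proof is correct and follows essentially the same route as the paper: both reduce the statement to producing a Berge cycle of length $r+1$ and then invoke Corollary~\ref{le5}, splitting on whether $v_r$ lies in the extra edge and, when it does not, identifying that edge as $\{v_0,v_1,\dots,v_{r-1}\}$ and re-routing the path so that a defining edge freed at a terminal vertex is forced to contain $v_r$. The only difference is cosmetic: you substitute $e$ for $e_1$ and free $e_1$ at the terminal vertex $v_0$, whereas the paper reverses the initial segment through $h$ and frees $e_{r-1}$ at the new terminal vertex $v_{r-2}$; both exchanges close the required $(r+1)$-cycle.
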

\begin{proof}
Since	$N(v_0)\setminus E(P) \neq \emptyset$, there exists a hyperedge $h\in N(v_0)\setminus E(P)$. If $v_r\in h$, then there exists a Berge cycle of length $r+1$ and the assertion holds by Corollary \ref{le5}. We now assume that  $v_r\notin h$. Then $h\subseteq\{v_0,v_1,\dots,v_{r-1}\}$, otherwise $\mathcal{H}$  has a Berge path of length at least $r+1$. Hence, $h=\{v_0,v_1,\dots,v_{r-1}\}$ by $|h|=r$.  It is easy to see that there is another Berge path $P^{\prime}=v_{r-2} e_{r-2} v_{r-3} \dots v_1 e_1 v_0 h v_{r-1} e_r v_r$  of length $r$ with $ V (P')= V (P)$, and $e_{r-1}\in N(v_{r-2})\setminus E(P^{\prime})$.  Furthermore,  $v_r\in e_{r-1}$,  otherwise $e_{r-1}=\{v_0,v_1,\ldots,v_{r-1}\}=h$ which contradicts to $h\notin E(P)$.   Therefore, there is a Berge cycle $C=v_r e_{r-1} v_{r-2} v_{r-2} e_{r-2} v_{r-3} \dots v_1 e_1 v_0 h v_{r-1} e_r v_r$ of length $r+1$. So the assertion holds by Corollary \ref{le5}.
\end{proof}

\begin{Corollary}
	\label{l5c2}
	Let $\mathcal{H}$ be an  $r$-uniformly connected hypergraph with a longest Berge path of length  $r$. If  there is  a Berge cycle of length at least $r$ in $\mathcal{H}$, then $\mathcal{H}$ must be an $(r+1)$-vertex complete hypergraph.
\end{Corollary}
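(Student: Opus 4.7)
The plan is to exhaust the possible cycle lengths, pushing each one back onto an earlier corollary. Since any Berge cycle of length $\ell$ can be opened at any vertex to yield a Berge path of length $\ell-1$, and the longest Berge path of $\mathcal{H}$ has length $r$, every Berge cycle satisfies $\ell\leq r+1$. Combined with the hypothesis $\ell\geq r$, this leaves only $\ell=r+1$ and $\ell=r$. The first case is precisely Corollary \ref{le5}, so the real work lies in the second.

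For $\ell=r$, write $C=v_0e_1v_1\cdots v_{r-1}e_rv_0$. A Berge path of length $r$ uses $r+1$ defining vertices, hence $|V(\mathcal{H})|\geq r+1>|V(C)|$, and $V(\mathcal{H})\setminus V(C)\neq\emptyset$. Connectedness then produces an edge $h$ meeting both $V(C)$ and $V(\mathcal{H})\setminus V(C)$: take the last edge of a shortest Berge path from some outside vertex to $V(C)$, arriving at $v_i\in V(C)$ through $w\in h\setminus V(C)$. When $h\notin E(C)$, which I regard as the main case, I would splice $h$ into the cycle to build
\begin{equation*}
P^{\ast}=w\,h\,v_i\,e_i\,v_{i-1}\,\cdots\,e_1\,v_0\,e_r\,v_{r-1}\,\cdots\,e_{i+2}\,v_{i+1},
\end{equation*}
a Berge path of length $r$ whose defining-edge set omits $e_{i+1}$. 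Since $v_{i+1}\in e_{i+1}$, the set $N(v_{i+1})\setminus E(P^{\ast})$ is nonempty, and Corollary \ref{l5c1} (with $v_{i+1}$ playing the role of $v_0$) delivers the conclusion that $\mathcal{H}$ is an $(r+1)$-vertex complete hypergraph.

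The delicate case will be $h\in E(C)$, say $h=e_j$ with $w\in e_j\setminus V(C)$, because the analogous path then exhausts every cycle edge and a naive application of Corollary \ref{l5c1} has no outside edge to offer at the endpoint. My intended workaround is to exploit the $r-2$ extra vertices of $e_j$: either some additional cycle vertex $v_m\in e_j$ acts as a chord, allowing me to reroute $C$ through $w$ into a Berge cycle of length $r+1$ (reducing back to Corollary \ref{le5}), or the non-cycle vertices of $e_j$ together with connectedness furnish a second edge outside $E(C)$ incident to $V(C)$ (reducing to the previous paragraph). Verifying that at least one of these two alternatives always occurs, and turning it into a concrete cycle or path of the required length, is where I expect the principal technical obstacle to lie.
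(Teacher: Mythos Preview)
Your case split ($\ell=r+1$ versus $\ell=r$) and your treatment of the main sub-case $h\notin E(C)$ are exactly what the paper does: open the cycle at $v_{i+1}$, append $h$ and the outside vertex to obtain a Berge path $P^{\ast}$ of length $r$ whose endpoint $v_{i+1}$ is still incident to the omitted cycle edge $e_{i+1}\notin E(P^{\ast})$, and invoke Corollary~\ref{l5c1}. The paper, however, never isolates the sub-case $h\in E(C)$; it writes down the same path and simply asserts that one cycle edge is absent from $E(P)$, an assertion that breaks down precisely when the appended edge $h$ is itself a cycle edge.

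You are right that this sub-case is the crux, but your two proposed escapes cannot be completed, because the corollary as stated is in fact false. For $r=3$ take $V(\mathcal{H})=\{v_0,v_1,v_2,w\}$ and
\[
E(\mathcal{H})=\bigl\{\,e_1=\{v_0,v_1,w\},\ e_2=\{v_1,v_2,w\},\ e_3=\{v_0,v_2,w\}\,\bigr\}.
\]
This connected $3$-uniform hypergraph has the Berge cycle $v_0e_1v_1e_2v_2e_3v_0$ of length~$3$ and a longest Berge path of length~$3$ (for instance $v_0e_1v_1e_2v_2e_3w$), yet it is not the complete $3$-graph on four vertices, since $\{v_0,v_1,v_2\}\notin E(\mathcal{H})$. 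Here every edge meeting the outside vertex $w$ is already a cycle edge, no $e_j$ contains a third cycle vertex, and there is no edge outside $E(C)$ whatsoever---so neither of your workarounds (a) and (b) is available. The obstruction you anticipated is therefore not a technicality but a genuine failure of the statement, and the paper's own argument shares the same gap.
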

\begin{proof} If  $\mathcal{H}$ contains a Berge cycle of length at least $r+1$, then by Corollary \ref{le5}, the assertion holds. Now we assume that there is a Berge cycle  $C=v_0 e_1 v_1 \ldots v _{r-1} e_r v_0 $  of length $r$ in $\mathcal{H}$.  Since there is a Berge path of length $r$ in a connected hypergraph $\mathcal{H}$, there exists a vertex  $ u\notin V(C)$  and a hyperedge $h$ in $\mathcal{H}$ such that $\{u,v_i\}\subseteq h$, $0\le i\le r-1$.
 Then  $P=v_{i+1}e_{i+1}\ldots v_{r-1}e_rv_0\ldots e_iv_ihu$ is a Berge path of length $r$ with $e_i\in N(v_{i+1})$ and $e_{i+1}\notin E(P)$.
By Corollary \ref{l5c1}, the assertion holds.
\end{proof}

\begin{Lemma}
	\label{le3}
	Let $\mathcal{H}$ be an $r$-uniform hypergraph satisfying  the  property that for each longest path,  each edge containing  a terminal vertex of the path belongs to its defining edge set. Let  $ P=v_0 e_1 v_1 e_2 v_2 \ldots v_{k-1} e_k v_k$ be a longest Berge path with $W= V (P)\setminus\{v_0\}=\{v_1,v_2,\dots,v_k\}$. Then the following hold.
	
	\begin{enumerate}[(i)]
		\item \label{l3c1} If  $v_i\in e_1\cap W$ with  $e_i\setminus W=\emptyset$ for some $1\le i\le k$, then there exists a Berge cycle of length at least $r$.
		\item \label{l3c1.5} If $v_i\in e_1\cap W$ with $e_i\setminus W\neq\emptyset$ for some  $1\le i\le k$, then $e_i$ is the first edge of some longest Berge path, 
and $N(e_i\setminus W)\subseteq  E(P) $.
		\item \label{l3c2} If $v_i\in e_1\cap W$ with $(e_i \cap e_j) \setminus W \neq \emptyset$   for some $1\le i<j\le k$, then $N(v_{j-1})\subseteq  E(P) $.
		\item \label{l3c2.5} If   $v_j\in e_1\cap W$ with $(e_i \cap e_j) \setminus W \neq \emptyset$   for $1\le i<j\le k$, then $N(v_i)\subseteq  E(P) $.
	\end{enumerate}
\end{Lemma}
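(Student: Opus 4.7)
The plan is to treat parts (ii), (iii), and (iv) uniformly by a rerouting argument, and to treat (i) by an explicit cycle construction. For each of (ii)--(iv), I will exhibit a longest Berge path $Q$ of length $k$ whose defining edge set is still exactly $\{e_1,\ldots,e_k\}$ but whose terminal vertex is the vertex named in the conclusion. Once such a $Q$ is produced, the standing hypothesis (every edge incident to a terminal of a longest path must lie in its defining edge set) immediately gives the desired containment in $E(P)$.

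Concretely, for (ii) I pick any $u \in e_i \setminus W$ and take
\[
Q:\ u\,e_i\,v_{i-1}\,e_{i-1}\,\ldots\,e_2\,v_1\,e_1\,v_i\,e_{i+1}\,\ldots\,e_k\,v_k,
\]
using the splice $v_1 \to v_i$ via $v_1, v_i \in e_1$ and $u \to v_{i-1}$ via $u, v_{i-1} \in e_i$; this is a longest Berge path with $e_i$ as its first edge and $u$ as a terminal, which delivers both conclusions of (ii). For (iii), with $u \in (e_i \cap e_j) \setminus W$, the analogous rewiring
\[
R:\ v_{j-1}\,e_{j-1}\,\ldots\,e_{i+1}\,v_i\,e_1\,v_1\,e_2\,\ldots\,e_{i-1}\,v_{i-1}\,e_i\,u\,e_j\,v_j\,e_{j+1}\,\ldots\,e_k\,v_k
\]
is a longest Berge path terminating at $v_{j-1}$. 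Part (iv) is produced symmetrically (using $v_j \in e_1$ rather than $v_i \in e_1$) to obtain a length-$k$ path terminating at $v_i$. In every case the routine checks are that the defining-edge multiset is exactly $\{e_1,\ldots,e_k\}$ (no $e_\ell$ repeated) and that all defining vertices are distinct; boundary values such as $i=1$ or $j=i+1$ need a slightly simpler splice but pose no essential difficulty.

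For (i), write $e_i = \{v_{j_1},\ldots,v_{j_r}\}$ with $j_1<\cdots<j_r$ and set $b = j_1$, $a = j_r$. Because $v_{i-1}, v_i \in e_i$ we have $b \le i-1$, $a \ge i$, and $a-b \ge r-1$. If $a = i$, then $i = a \ge b + r - 1 \ge r$, so the cycle $v_1\,e_2\,v_2\,\ldots\,e_i\,v_i\,e_1\,v_1$ already has length $i \ge r$. If $a > i$, the cycle
\[
v_1\,e_2\,\ldots\,e_{i-1}\,v_{i-1}\,e_i\,v_a\,e_a\,v_{a-1}\,\ldots\,e_{i+1}\,v_i\,e_1\,v_1
\]
has defining edges $\{e_1, e_2, \ldots, e_a\}$ (all distinct) and defining vertices $v_1,\ldots,v_a$ (all distinct), so its length is $a \ge b + r - 1 \ge r$. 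The main obstacle is precisely this second subcase: the cycle must simultaneously use the chord $e_1$ (shortcutting $v_i \to v_1$) and the chord $e_i$ (shortcutting $v_{i-1} \to v_a$) while harvesting all intermediate path edges on both sides of $v_i$ without duplication, and the length bound $a \ge r$ is forced by the pigeonhole fact $a - b \ge r - 1$ together with $b \ge 1$.
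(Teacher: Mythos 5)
Your proposal is correct and takes essentially the same route as the paper: for (ii)--(iv) you reroute $P$ through the splices at $e_1$, $e_i$, $e_j$ to make the relevant vertex a terminal of a longest path with defining edge set $E(P)$ (your $Q$ and $R$ are exactly the paper's paths, and the symmetric path for (iv) is the one the paper writes out), and for (i) you close a cycle through the highest-indexed vertex of $e_i$, which is the paper's argument with your $a$ playing the role of its $t\ge r$.
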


\begin{proof}
	
	(\ref{l3c1}) Since $v_i\in e_1\cap W$ with  $e_i\setminus W=\emptyset$ for  $1\le i\le k$, we have  $e_i\subseteq W$ and $|e_i\cap W|=|e_i|=r$. Then
$t:=\max\{j: v_j\in e_i\cap W\}\ge r$. Hence there is a Berge cycle  $C= v_t e_1 v_1 \dots v_{i-1} e_i v_t e_t v_{t-1} \dots v_{i+1} e_{i+1} v_i$ of length  $t\ge r$.  So (i) holds. 
	
	(\ref{l3c1.5}). 
	Since $e_i\setminus W\neq \emptyset,$  let $u$ be any vertex in $e_i\setminus W$. Then  by $v_i\in e_1$, there is a  Berge path $$Q=  ue_i v_{i-1} e_{i-1} v_{i-2} \ldots v_1 e_1 v_i e_{i+1} v_{i+1} \ldots v_{k-1} e_k v_k$$ of length $k$ with a terminal vertex $u$.  By the property of $\mathcal{H}$,   each edge containing a terminal vertex in a longest path  must be contained in the defining edge set. Hence $N(u)\subseteq  E(P) $  and  $N(e_i\setminus W)=\bigcup_{u\in e_i\setminus W}N(u)\subseteq  E(P) $. So (ii) holds.
	
	(\ref{l3c2}). Since $(e_i \cap e_j) \setminus W \neq \emptyset$, there is a vertex 
  $u\in (e_i \cap e_j)\setminus W$. Thus  $u\in e_i, u\in e_j$ and $v_i\in e_1$.
	Hence there is a Berge path $$Q=v_{j-1} e_{j-1} v_{j-2} \ldots v_{i+1} e_{i+1} v_i e_1 v_1 e_2 v_2 \ldots v_{i-1} e_i u e_{j} v_j e_{j+1} v_{j+1} \ldots v_{k-1} e_k v_k$$ of length $k$ with a terminal vertex  $v_{j-1}$. Hence  $N(v_{j-1})\subseteq  E(P) $. So (iii) holds.
	
	(\ref{l3c2.5}).
 Since $(e_i \cap e_j) \setminus W \neq \emptyset$, there is a vertex 
  $u\in (e_i \cap e_j)\setminus W$.
	Hence there is a Berge path   $$Q=v_i e_{i+1} v_{i+1} \ldots v_{j-1} e_{j} u e_i v_{i-1} \ldots v_1 e_1 v_j e_{j+1} v_{j+1} \ldots v_{k-1} e_k v_k$$ of length $k$ with a terminal vertex $v_{i}$. Hence, $N(v_{i})\subseteq  E(P) $. So (iv) holds.
\end{proof}

\begin{Theorem}
	\label{thx}
	Let $\mathcal{H}$  be  an $n$-vertex, $r$-uniformly connected  hypergraph  with the  longest Berge path of length $k$.  If  $1<k\leq r$, then  there exists a nonempty set $S\subseteq V(\mathcal{H})$ such that  $p_{\mathcal{H}}(e)=k$ for  each $ e\in N(S)$, where $S$ satisfies  one of the following:
	\begin{enumerate}[(i)]
		\item \label{x0} $k=r$, $|S|= r$ and $|N(S)|\le r$;
		\item \label{x1} $k=r$, $|S|\geq r+1$ and $|N(S)|\leq r+1$;
		\item \label{x2} $1<k<r$, $|S|\geq r+1$ and $|N(S)|\leq k$;
		\item \label{x3} $|S|=r-1$, and $|N(S)|=1$.
	\end{enumerate}
	Furthermore, there is a good set $S$ in $\mathcal{H}$,  and  $\mathcal{S}_r(\mathcal{H})\neq \emptyset$.
\end{Theorem}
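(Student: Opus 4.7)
The plan is to fix a longest Berge path $P=v_0 e_1 v_1\cdots e_k v_k$ in $\mathcal{H}$, set $W=V(P)\setminus\{v_0\}$, and assemble $S$ from vertices whose every incident edge must lie in $E(P)$; this automatically gives $N(S)\subseteq E(P)$ and hence $|N(S)|\le k$.

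\medskip

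I would first dispatch the extremal sub-case. If $k=r$ and $\mathcal{H}$ contains a Berge cycle of length $r+1$, Corollary~\ref{le5} forces $\mathcal{H}$ to be the $(r+1)$-vertex complete hypergraph, and $S:=V(\mathcal{H})$ with $|S|=|N(S)|=r+1$ realizes case (ii). Otherwise, I verify the hypothesis of Lemma~\ref{le3}: for $k=r$ this is precisely Corollary~\ref{l5c1}; for $1<k<r$ an edge $h\in N(v_0)\setminus E(P)$ must contain a vertex outside $V(P)$ (since $|V(P)|=k+1\le r$), extending $P$ to length $k+1$, a contradiction, with the boundary case $k=r-1$ handled by observing that $h=V(P)$ would create a Berge cycle of length $r$ that via Lemma~\ref{le4} collapses $\mathcal{H}$ to $r$ vertices and at most one edge, contradicting $k\ge 2$.

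\medskip

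With the hypothesis of Lemma~\ref{le3} in force, I iteratively accumulate vertices with neighborhood contained in $E(P)$. Starting from $v_0$ and $v_k$ (by the property itself), for each $v_i\in e_1\cap W$ either part~(i) of Lemma~\ref{le3} yields a Berge cycle of length $\ge r$ (which Corollary~\ref{l5c2} routes back to the already handled extremal case) or part~(ii) gives $e_i\setminus W\ne\emptyset$ with $N(e_i\setminus W)\subseteq E(P)$. Parts~(iii) and~(iv) then flag additional interior path vertices $v_{j-1}$ and $v_i$ whenever suitable intersection conditions are met, and the symmetric analysis at $v_k$ doubles the harvest. The resulting union $S$ satisfies $N(S)\subseteq E(P)$, hence $|N(S)|\le k$.

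\medskip

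Finally I case-split on $k$ and on $|S|$. When $k=r$, $|S|\ge r+1$ gives case (ii) and $|S|=r$ gives case (i). When $1<k<r$, the inequality $|e_1\setminus V(P)|\ge r-k\ge 1$ and its symmetric counterpart at $e_k$, together with $v_0,v_k\in S$, force $|S|\ge r+1$, yielding case (iii); sparse residual configurations collapse to case (iv) by taking $S=e\setminus\{v\}$ for a suitable edge $e$ and vertex $v$. The main obstacle I anticipate is the bookkeeping of overlaps of $e_i\setminus W$ across indices $i$, which determines exactly which of (i)--(iv) is witnessed. Once $|S|$ and $|N(S)|$ match a case, the goodness bound $|N(S)|\le f_r(k)|S|$ follows by direct verification, establishing $\mathcal{S}_r(\mathcal{H})\ne\emptyset$.
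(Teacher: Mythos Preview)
Your overall plan coincides with the paper's: dispatch the complete $(r{+}1)$-hypergraph via Corollaries~\ref{le5}--\ref{l5c2}, verify the hypothesis of Lemma~\ref{le3}, and then harvest vertices with $N(\cdot)\subseteq E(P)$ from the sets $e_i\setminus W$ together with path vertices supplied by Lemma~\ref{le3}(iii). But the counting you sketch to reach $|S|\ge r+1$ does not work, and one key device is missing.

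The claim that for $1<k<r$ the bound ``$|e_1\setminus V(P)|\ge r-k$ and its symmetric counterpart at $e_k$, together with $v_0,v_k\in S$, force $|S|\ge r+1$'' fails. The relevant set is $e_1\setminus W$, not $e_1\setminus V(P)$, and more to the point the off-$W$ portions of different $e_i$ may coincide. For $k=3$, $r=5$ all of $e_1,e_2,e_3$ could share the same two vertices outside $W$, so $\bigl(\bigcup_i(e_i\setminus W)\bigr)\cup\{v_k\}$ has size~$3$, far from $r+1=6$; the ``symmetric analysis at $v_k$'' does not rescue this, since $e_k\setminus W'$ may contain the very same off-path vertices.

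What you are missing is the paper's extremal choice: among all longest Berge paths, fix one that \emph{minimizes} $|e_1\setminus W|$. By Lemma~\ref{le3}(ii) each $e_{i_j}$ with $v_{i_j}\in e_1\cap W$ is itself the first edge of some longest path on the same $W$, so minimality gives $|e_{i_j}\setminus W|\ge |e_1\setminus W|$ for every such $j$. The paper then builds $A_0\subsetneq A_1\subsetneq\cdots\subsetneq A_s$ recursively---at step $j$ adjoining either $e_{i_j}\setminus W$ (if disjoint from $A_{j-1}$) or the fresh path vertex $v_{i_j-1}$ (otherwise, via Lemma~\ref{le3}(iii))---guaranteeing $|A_s|\ge r-1$. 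The delicate equality case $|A_s|=r-1$ is then split further, and it is precisely the minimality inequality that makes it tractable: in one branch it forces every $|e_{i_j}\setminus W|=1$, whence $k\in\{r-1,r\}$; in another it forces $e_{i_1}\setminus W=e_{i_0}\setminus W$, enabling a second recursive pass along a re-rooted longest path to pick up one more usable vertex. Without this extremal choice of $P$, the overlap ``bookkeeping'' you rightly anticipate cannot be closed.
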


\begin{proof} Let $\mathcal{BP}_k$ be the set of all Berge paths of length $k$ in $\mathcal{H}$. We consider the following two cases.

\setcounter{case}{0}
\begin{case} There exists a Berge path $P=v_0 e_1 v_1 e_2 v_2 \ldots v_{k-1} e_k v_k \in \mathcal{BP}_k$  such that  $N(v_0)\setminus E(P)\neq \emptyset$.
\end{case}

Then there exists a hyperedge $e\in N(v_0)$ and $e\notin E(P)$.  Since   $P$ is a longest Berge path of length $k$, we have $e\subseteq V(P)$ by $e\in N(v_0)$.  Then $r=|e|\le |V(P)|=k+1\le r+1$, which implies that either $k=r$ or $k=r-1$.  Furthermore, we claim that  $k=r$. In fact, if $k= r-1$, then $r=k+1>2$ and $e=V(P)$.  Since $|e_k|=r$ and $e_k\neq e$,  there exists a vertex $u\in e_k$ and $u\notin V(P)$. So there exists a Berge path $Q=ue_kv_kev_0e_1v_1\ldots v_{k-2}e_{k-1}v_{k-1}$ of length $k+1$, which is impossible. Hence $k=r$.  Furthermore, by Corollary \ref{l5c1}, $\mathcal{H}$ must be an $(r+1)$-vertex complete hypergraph.  Set $S=V(\mathcal{H})$. It is easy to see that  $p_{\mathcal{H}}(e)=k$ for $\forall e\in N(S)$, where $|S|=r+1$ and $|N(S)|=|E(\mathcal{H})|=r+1$.  Hence, $S$ satisfies  (\ref{x1}) in Theorem \ref{thx}. Moreover, $|N(S)|=f_r(r)|S|$.
So $S$ is a good set and  $\mathcal{S}_r(\mathcal{H})\neq \emptyset$.
\begin{case}
	\label{cs2}
	For each $P=v_0 e_1 v_1 e_2 v_2 \cdots v_{k-1} e_k v_k \in \mathcal{BP}_k$,  we have $N(v_0)\subseteq E(P)$ and $N(v_k)\subseteq E(P)$.
\end{case}
 Without loss of generality, we may assume that $P=v_0 e_1 v_1 e_2 v_2 \cdots v_{k-1} e_k v_k \in \mathcal{BP}_k$  has the minimum size  $|e_1\setminus W|$ in $\mathcal{BP}_k$,  where $W= V (P)\setminus\{v_0\}=\{v_1,v_2,\dots,v_k\}$.
       Clearly $v_0\in e_1 $ and $v_0\notin  W$. Then $e_1\setminus W\neq \emptyset$ and $v_1\in e_1\cap W$. Hence by   Lemma \ref{le3} (\ref{l3c1.5}),  $N(e_1\setminus W)\subseteq  E(P) $. Now we consider the following two subcases.

\begin{2case}
	$|e_1\cap W|=1$.
\end{2case}

If there is 		$(e_1 \cap e_j)\setminus W \neq\emptyset$ for some $2\le j\le k$, then  $N(v_{i-1})\subseteq  E(P) $ by 	 Lemma \ref{le3} (\ref{l3c2}).  Let $S:=\{v_{j-1},v_k\}\cup (e_1\setminus W)$. Clearly, $v_{j-1}, v_k\in W$.  Moreover,  $|e_1\setminus W|=|e_1|-|e_1\cap W|=r-1$. Hence  $|S|= |e_1\setminus W|+2=r+1$. Furthermore, $N(S)=N(v_{j-1})\cup N(v_k)\cup N(e_1\setminus W)\subseteq E(P)$. Then  $p_{\mathcal{H}}(e)=k$  for $e\in N(S)$,  $|S|=r+1$ and  $|N(S)|\leq | E(P) |=k$. Hence $S$ satisfies  either  (\ref{x1}) or (\ref{x2}) in Theorem \ref{thx}. Furthermore, it is easy to see that $|N(S)|\le k\le f_r(k)|S|$. So $S$ is a good set and $\mathcal{S}_r(\mathcal{H})\neq \emptyset$.
	
If 		$(e_1 \cap e_j)\setminus W =\emptyset$ for  $j=2, \ldots, k$, then $e_j\cap (e_1\setminus W)=\emptyset$ and  $e_j\notin N(e_1\setminus W)$ for $j=2, \ldots, k.$   On the other hand,  $N(e_1\setminus W)\subseteq  E(P)=\{e_1, \ldots, e_k\}$  and $e_1\in N(e_1\setminus W)$. Then  $N(e_1\setminus W)=\{e_1\}$.  Set  $S=e_1\setminus W$. Then $|S|=r-1$ and  $|N(S)|=1$. So  $S$ satisfies  (\ref{x3}) in Theorem \ref{thx}.
Furthermore, by $1<k\le r$, we have $|N(S)|=1\le f_r(k)|S|$ and $S$ is a good set.
\begin{2case}
	$|e_1\cap W|>1$. 
\end{2case}

  Denote  $e_1\cap W=\{v_{i_0},v_{i_1},\dots,v_{i_s}\}$ with $s\ge 1$ and $1=i_0<i_1<\dots<i_s\le k$.  If  there is 	$e_{i_j}\setminus W=\emptyset$ for some $1\le j\le s$,
then by Lemma \ref{le3} (\ref{l3c1}), there exists a Berge cycle of length at least $r$. Hence by Corollary \ref{l5c2},  $\mathcal{H}$ must be an $(r+1)$-vertex complete hypergraph. So $k=r$.  Let  $S=V(\mathcal{H})$. Then  it is easy to see that   $p_{\mathcal{H}}(e)=k$ for  each $ e\in N(S)$. Moreover  $|S|=r+1$ and $|N(S)|= E(\mathcal{H})|=r+1$. Hence $S$ satisfies   (\ref{x1}) in  Theorem \ref{thx}.
Furthermore, $|N(S)|=r+1\le f_r(k)|S|$ and $S$ is a good set.

  Therefore, we assume now that 	$e_{i_j}\setminus W \neq\emptyset$ for  $j=1, \ldots, s$.
In addition,  by $v_0\in e_1\setminus W$,   we have $|e_1\setminus W|\neq\emptyset$.  Hence by Lemma \ref{le3} (\ref{l3c1.5}),  $e_{i_j}$ is the first edge of some Berge path in $\mathcal{BP}_k$ for $j=0, 1, \ldots, s$.  Then by the minimality of $|e_1\setminus W|$, we have
\begin{equation}
	\label{eq4}
	|e_{i_j}\setminus W|\geq |e_{i_0}\setminus W|= |e_1\setminus W|\geq 1, \ \ j=1, 2,\ldots, s.
\end{equation}
We define the sets $A_0,A_1,A_2,\dots,A_s$  recursively.  Set  $A_0=e_{i_0}\setminus W$.  For $j=1,2,\dots,s$, set
\begin{equation}\label{eq10}
	A_j=\left\{
	\begin{array}{lll}
		&A_{j-1}\cup (e_{i_j}\setminus W), &  {\text{if}}\ A_{j-1}\cap (e_{i_j}\setminus W)=\emptyset; \\
		&A_{j-1}\cup (e_{i_j}\setminus W)\cup \{v_{i_j-1}\}, & {\text{otherwise}}.
	\end{array}
	\right.
\end{equation}
Then we have the following claims:

(a). $A_j\cap W\subseteq \{v_{i_1-1}, v_{i_2-1}, \ldots, v_{i_j-1}\}$  and $A_j\setminus W\subseteq (e_{i_0}\setminus W)\cup \cdots \cup (e_{i_j}\setminus W)$ for $j=1, \ldots, s$.

(b). $N(A_j)\subseteq E(P)$ for $j=1, \ldots, s$.

(c). $|A_j|\ge |A_{j-1}|+1$ for $j=1, \ldots, s$.

(d). $|A_s|\ge r-1$ with equality if and only if $|A_j|=|A_{j-1}|+1$ for $j=1, \ldots, s$.

 We prove (a)-(c) by the induction on $j$. Clearly, (a)-(c) holds for $j=1$ and we assume that the assertion holds for $j-1$.

 If $A_{j-1}\cap (e_{i_j}\setminus W)=\emptyset$, then $A_j\cap W=(A_{j-1}\cup (e_{i_j}\setminus W))\cap W=A_{j-1}\cap W
\subseteq \{v_{i_1-1}, v_{i_2-1}, \ldots, v_{i_{j-1}-1}\}$. Moreover, by
$v_{i_j}\in e_1\cap W$, $e_{i_j}\setminus W\neq \emptyset$ and
Lemma \ref{le3} (\ref{l3c1.5}), we have $N(e_{i_j}\setminus W)\subseteq E(P)$.
 So $N(A_j)\subseteq N(A_{j-1})\cup N(e_{i_j}\setminus W)\subseteq E(P)$.  In addition,  by $e_{i_j}\setminus W\neq\emptyset$, we have
$|A_j|=|A_{j-1}|+|e_{i_j}\setminus W|\ge |A_{j-1}|+1$. Hence, (a)-(c) holds.

 If $A_{j-1}\cap (e_{i_j}\setminus W)\neq\emptyset$, then  $A_j\cap W=((A_{j-1}\cup (e_{i_j}\setminus W))\cup \{v_{i_j-1}\})\cap W=A_{j-1} \cup \{v_{i_j-1}\}\subseteq \{v_{i_1-1}, v_{i_2-1}, \ldots, v_{i_{j}-1}\}$. Moreover, by  Lemma \ref{le3} (\ref{l3c2}), we have   $N(v_{i_j-1})\subseteq E(P)$.
  Hence,  $N(A_j)\subseteq N(A_{j-1})\cup N(e_{i_j}\setminus W)\cup N(v_{i_j-1})\subseteq E(P) $.  In addition, by $v_{i_j-1}\in W$, we have $v_{i_j-1}\notin A_{j-1}$. Hence,
 $|A_j|\ge |A_{j-1}|+1$.  So  (a)-(c) hold. 

In addition, by (c) and $r=|e_1\setminus W|+|e_1\cap W|=s+1+|A_0|$, we have $|A_s|\ge |A_{s-1}|+1\ge\cdots\ge |A_0|+s=r-1$ with equality if and only if $|A_j|=|A_{j-1}|+1$ for $j=1, \ldots, s$. So (d) holds. We finish the proof of Claims (a)-(d).

Furthermore,  if $|A_s|\ge r$,  let $S=A_s\cup \{v_k\}. $  It is easy to see that  $|S|\ge r+1$ and   $p_{\mathcal{H}}(e)=k$ for  each $ e\in N(S)$.  Moreover by the assumption of Case 2, $N(S)\subseteq N(A_s \cup \{u_k\})\subseteq N(A_s)\cup N(v_k)\subseteq E(P)$, which implies that $|N(S)|\le |E(P)|=k$. Hence $S$ satisfies  (\ref{x1}) or (\ref{x2}). 
So $|N(S)|\le f_r(k)|S|$ and $S$ is a good set.
Hence, we can assume that $|A_s|=r-1$, i.e., $|A_j|=|A_{j-1}|+1$ for $j=1, \ldots, s$ and consider the following two subcases.

\begin{3case}
	$A_{j-1}\cap (e_{i_j}\setminus W)=\emptyset$ for $j=1, \ldots, s$.
\end{3case}

Then by (a), $A_j=(e_{i_0}\setminus W)\cup \cdots \cup (e_{i_j}\setminus W)$ and $((e_{i_0}\setminus W)\cup \cdots\cap (e_{i_{j-1}}\setminus W))\cap (e_{i_j}\setminus W)=\emptyset$. Hence, by (d), $|e_{i_j}\setminus W|=1$ for $j=1, \ldots, s$. In addition, by (9), $|e_{i_0}\setminus W|=1$, which implies that $s+1=|e_{i_0}\setminus W|=|e_{i_0}|-|e_{i_0}\cap W|=r-1$.
 Hence, $$r\ge k=|W|\ge |e_{i_j}\cap W|=|e_{i_j}|-|e_{i_j}\setminus W|=r-1, \ {\text{ for}}\ j=0, \ldots, r-2.$$

 If $k=r$, then let $S=A_s\cup \{v_k\}$.  Then $|S|=r$ and $N(S)\subseteq N(A_s)\cup N(v_k)\subseteq E(P)$ which implies that (\ref{x0})  holds.
 So $|N(S)|\le f_r(r)|S|$ and $S$ is a good set.

If $k=r-1$, then $|W|=k=r-1$ and $|e_{i_j}\cap W|=r-1$ for $j=0, \ldots, s.$ So $e_{i_j}\cap W=W$ for $j=0,
\ldots, s$. Then $\{v_{i_0}, \ldots, v_{i_s}\}=e_1\cap W=W=\{v_1, \ldots, v_{r-1}\}$  and  $i_j=j+1$ for $j=0, \ldots, s=r-2$.  So  let $e_{i}\setminus W=\{u_{i}\}$ for $i=1, \ldots, r-1$. Then  there is a Berge path of length $r-1$ with one terminal vertex $v_i$:   $u_{i+1} e_{i+1} \dots e_{r-1} v_{r-1} e_1 v_1$ $ e_2 v_2 \dots v_{i-1} e_i v_i$ for $i=1, \ldots, r-1$. Hence, $N(W)\subseteq  E(P) $. Let $S=W\cup A_s$. Then $|S|\geq 2(r-1)\geq r+1$ and $|N(S)|\le |E(P)|\le k=r-1$ by $r\ge 3$. Thus, (\ref{x2})  holds. So $|N(S)|\le k\le \frac{k}{r+1}|S|=f_r(k)|S|$ and $S$ is a good set.

\begin{3case}
	There exists an $1\le p\le s$ such that $A_{p-1}\cap (e_{i_p}\setminus W)\neq\emptyset$ and    $A_{j-1}\cap (e_{i_j}\setminus W)=\emptyset$ for $j=1, \ldots, p-1$.
\end{3case}

Then by (\ref{eq10}), $A_{p-1}=(e_{i_0}\setminus W)\cup\cdots\cup  (e_{i_{p-1}}\setminus W)$ and  $(e_{i_q}\setminus W)\cap  (e_{i_{p}}\setminus W)\neq \emptyset$ for some $0\le q<p$.  If  $i_q<i_p-1$, then
 $v_{i_q}\notin A_s\cup \{v_k\}$, which implies that   (\ref{x1}) or (\ref{x2}) 
 holds for $S=A_s\cup \{v_{i_q}, v_k\}$. Moreover, $S$ is a good set.
If 	$i_q=i_p-1$ and $i_q>1$,  let  $x\in (e_{i_q}\setminus W)\cap (e_{i_p}\setminus W)$. Hence, the Berge path $v_{i_q-1} e_{i_q-1} v_{i_q-2} \dots v_1 e_1 v_{i_q} e_{i_q} x e_{i_p} v_{i_p} \dots v_{k-1} e_k v_k$ is a maximum length of length $k$. So $N(v_{i_q-1})\subseteq  E(P) $ by assumption of Case \ref{cs2}. In addition, by the choice of $p$, $v_{i_q-1}$ must not be in $A_s$ and set $S=A_s\cup\{v_{i_q-1},v_k\}$. Then (\ref{x1}) of (\ref{x2})  holds. So $S$ is a good set.  If $i_q=i_p-1$ and $i_q=1$, i.e.,  $p=1,q=0$ and $i_p=2$. Then by (d),  we have $A_1=A_0 \cup \{v_1\}\cup (e_{i_1} \setminus W)$, which implies that  $e_{i_1}\setminus W\subseteq A_0= e_{i_0}\setminus W$. Hence by \eqref{eq4} and $e_{i_1}\neq e_{i_0}$, we have $e_{i_1}\setminus W=e_{i_0}\setminus W$ and $e_{i_1}\cap W\neq e_{i_0}\cap W.$  Let $v_t\in (e_{i_1}\setminus e_{i_0})\cap W$ and  $(e_{i_0}\cap W)\cup \{v_t\}:=\{v_{j_0},v_{j_1},\dots,v_{j_{s+1}}\}$ for  $1=j_0<j_1<\dots<j_{s+1}$. Define recursively $B_0,B_1,\dots,B_{s+1}$. Let $B_0:=e_1\setminus W$, and for $l\in\{1,2,\dots,s+1\}$,
\begin{equation}
	B_l=\left\{
	\begin{array}{lll}
		&B_{l-1}\cup (e_{j_l}\setminus W), & B_{l-1}\cap (e_{j_l}\setminus W)=\emptyset; \\
		&B_{l-1}\cup \{v_{j_l-1}\}, &{\text{otherwise}}.
	\end{array}
	\right.
\end{equation}
Recalling that $e_2$ is the first edge of a Berge path  $P':=x e_2 v_1 e_1 v_2 e_3 v_3 \dots v_{k-1} e_k v_k$ for some $x\in e_2\setminus W$ in  $\mathcal{BP}_k$. Note that $E(P')=E(P)$. Let $W'=V(P')\setminus \{x\}$. Then $W'=W$. Since $v_t\in e_2\cap W'$ and $e_2\setminus W'\neq\emptyset$, we can apply Lemma \ref{le3} (\ref{l3c1.5}) on the Berge path $P'$, and conclude that $N(e_t\setminus W')\subseteq E(P')$, which is rewritten as $N(e_t\setminus W)\subseteq E(P)$. So $N(e_{j_l}\setminus W)\subseteq E(P)$ for each $l$. Clearly $B_{l-1}\cap W\subseteq\{v_{j_0-1},v_{j_1-1},\dots,v_{j_{l-1}-1}\}$, so $|B_l|>|B_{l-1}|$ always holds, and $|B_{s+1}|\geq |B_0|+s+1=r$. Since $B_{s+1}\cap W\subseteq \{v_{j_0-1},v_{j_1-1},\dots,v_{j_{s+1}-1}\}$, we have $v_k\notin B_{s+1}$. Let $S=B_{s+1} \cup \{v_k\}$. Then either (\ref{x1}) or (\ref{x2}) 
holds. So $S$ is a good set. We finish our proof.
\end{proof}

\begin{Lemma}
	\label{le1} Let  $\mathcal{H}$ be an $r$-uniform hypergraph.  For a given Berge path
$P$  with a terminal vertex  $v_0$, denote by $\mathcal{P}(P,v_0)$  the set of Berge paths obtained by rearranging the defining vertices and defining edges of $P$ with one terminal vertex $v_0$, i.e., $ \mathcal{P}(P,v_0):=
\{Q :  Q$  is a Berge path with one terminal vertex $ v_0$  with  $V (Q)=V(P)$  and $ E(Q)= E(P) \}$.
Then, there exists a nonempty set $\mathcal{P'}\subseteq \mathcal{P}(P,v_0)$ such that $$|N_{ E(P) }(\tau(\mathcal{P'}))|\leq 2|\tau(\mathcal{P'})|-1,$$  where  $\tau (P')$ is the other terminal  vertex of $P'$ other than $v_0$ for $P'\in \mathcal{P}(P,v_0)$ and
$\tau(\mathcal{P}^{\prime})=\{\tau(P^{\prime}): P^{\prime}\in \mathcal{P}^{\prime}\}$.
\end{Lemma}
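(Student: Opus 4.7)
The plan is to adapt the Pósa-style rotation technique already used in this section (cf.\ Lemma \ref{le3}) to the family of rearrangements of $P$ that keep $v_0$ fixed as a terminal. Write $P = v_0 e_1 v_1 e_2 \dots e_k v_k$ and start with the singleton choice $\mathcal{P}' = \{P\}$, so that $\tau(\mathcal{P}') = \{v_k\}$. If $|N_{E(P)}(v_k)| = 1$ (which happens exactly when $v_k$ lies only in $e_k$ among the defining edges), then the desired inequality $1 \leq 2\cdot 1 - 1$ holds with equality and we are done. Otherwise, there is some index $j < k$ with $v_k \in e_j$, and then the rotation
$v_0 e_1 v_1 \dots v_{j-1} e_j v_k e_k v_{k-1} \dots e_{j+1} v_j$
is a rearrangement in $\mathcal{P}(P, v_0)$ whose other terminal is $v_j$; we add this rearranged path to $\mathcal{P}'$.

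The procedure then iterates: while the current family satisfies $|N_{E(P)}(\tau(\mathcal{P}'))| \geq 2|\tau(\mathcal{P}')|$, we select an edge $e \in N_{E(P)}(\tau(\mathcal{P}'))$ together with a rearrangement $Q \in \mathcal{P}'$ whose terminal lies in $e$ but is different from the last-edge terminal of $Q$, and rotate $Q$ at $e$ to produce a new path whose terminal lies \emph{outside} $\tau(\mathcal{P}')$. That new path is appended to $\mathcal{P}'$, enlarging $\tau(\mathcal{P}')$ by one vertex. Since $\tau(\mathcal{P}') \subseteq V(P) \setminus \{v_0\}$ has at most $k$ elements, the loop terminates, and the halting condition gives the required inequality $|N_{E(P)}(\tau(\mathcal{P}'))| \leq 2|\tau(\mathcal{P}')| - 1$.

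The principal obstacle is to verify the iterative step: whenever the current potential $|N_{E(P)}(\tau(\mathcal{P}'))| - 2|\tau(\mathcal{P}')|$ is non-negative, a rotation producing a genuinely new terminal can always be found. The intended accounting is amortized --- adding a new terminal $v$ can introduce at most two previously-absent edges into $N_{E(P)}(\tau(\mathcal{P}'))$, namely the bridging edge used in the rotation and at most one further edge incident to $v$ --- so the potential is monotonically non-increasing along the procedure and must eventually drop to $-1$. Establishing that such a favorable rotation exists whenever the potential is non-negative will require an exchange/pigeonhole argument between the edges of $N_{E(P)}(\tau(\mathcal{P}'))$ and the multiset of last edges of the rearranged paths currently in $\mathcal{P}'$, using the fact that there are strictly more available rotation edges than distinct last edges exactly when the potential is positive. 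Handling the case when a candidate rotation returns an already-visited terminal --- which forces us to re-select either a different edge $e$ or a different host path $Q \in \mathcal{P}'$ --- is the subtle point requiring careful bookkeeping of the hypergraph incidence structure.
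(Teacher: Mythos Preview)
Your proposal has a genuine gap in the amortization step.  You assert that ``adding a new terminal $v$ can introduce at most two previously-absent edges into $N_{E(P)}(\tau(\mathcal{P}'))$,'' but this is simply false in the hypergraph setting $r\ge 3$: a defining vertex $v_j$ of $P$ can lie in many of the defining edges $e_1,\dots,e_k$, not just in $e_j$ and $e_{j+1}$.  (Contrast this with the graph case $r=2$, where a path edge is $\{v_{i-1},v_i\}$ and each interior vertex is in exactly two edges of the path.)  So the potential $|N_{E(P)}(\tau(\mathcal{P}'))|-2|\tau(\mathcal{P}')|$ need not be monotone along your procedure, and the argument that it must eventually reach $-1$ collapses.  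You in fact flag this as the ``principal obstacle'' and leave it open; the pigeonhole/exchange heuristic you sketch does not control the number of new edges a fresh terminal may drag in.

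The paper avoids this difficulty with a different device.  Instead of growing $\mathcal{P}'$ greedily and tracking a potential, it takes $\mathcal{P}'$ to be a \emph{maximal} subfamily of $\mathcal{P}(P,v_0)$ subject to the structural invariant that every edge $e_j$ with $v_{j-1},v_j\notin\tau(\mathcal{P}')$ still occurs as the segment $v_{j-1}e_jv_j$ in every member of $\mathcal{P}'$.  Maximality with respect to this invariant then forces $e_j\cap\tau(\mathcal{P}')=\emptyset$ whenever $v_{j-1},v_j\notin\tau(\mathcal{P}')$ (Claim~1 in the paper), which says precisely that $N_{E(P)}(\tau(\mathcal{P}'))\subseteq\{e_i:v_{i-1}\in\tau(\mathcal{P}')\text{ or }v_i\in\tau(\mathcal{P}')\}$.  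With $I=\{i:v_i\in\tau(\mathcal{P}')\}$ this set has size at most $|I|+|I+1|-1\le 2|\tau(\mathcal{P}')|-1$, using $l\in I$.  The invariant is what makes the hypergraph incidences irrelevant: it guarantees that any edge meeting a terminal must already have one of its \emph{defining} endpoints among the terminals, which is exactly the ``$\le 2$'' control you were hoping to get from the rotation step but could not justify.
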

\begin{proof}
	Let $P=v_0 e_1 v_1 e_2 v_2 \cdots v_{l-1} e_l v_l$ be a Berge path with $l=| E(P) |$.  Let $\mathcal{P'}$ be a maximal subset of $\mathcal{P}(P,v_0)$ that satisfies the following two conditions:
	\begin{enumerate}[p1)]
		\item $P\in\mathcal{P'}$;
		\item  If $v_{j-1}, v_j\notin \tau(\mathcal{P'})$  for some $1\le j\le l$, then  $v_{j-1}e_{j}v_{j}$ is a segment of $P^{\prime}$ for each Berge path $P^{\prime}\in \mathcal{P}^{\prime}$.
	\end{enumerate}
	Clearly, $\{P\}$ is a subset of $\mathcal{P}(P,v_0)$ that satisfies p1) and p2). Hence $\mathcal{P'}\neq \emptyset$.
	Further, we have the following Claim 1:

{\bf Claim 1.} If $v_{j-1}, v_j\notin \tau(\mathcal{P'})$  for $1\le j\le l$, then $\tau(\mathcal{P'})\cap e_j=\emptyset$. 

	{\bf Proof of Claim 1}.
	We prove Claim 1 by  contradiction.  Suppose that  $\tau(\mathcal{P'})\cap e_j\neq\emptyset$  and there is a vertex  $v\in \tau(\mathcal{P'})\cap e_j$. Then there is a Berge path  $Q\in \mathcal{P}^{\prime}$ with  $\tau(Q)=v$. 	By p2),  $v_{j-1}e_j v_j$ is a segment of $Q$. So we may  split $Q$ into three segments as $Q=Q_{v_0 \sim v_{j-1}}e_j Q_{v_j \sim v}$.  By  $v\in e_j$, $Q^{\prime}:=Q_{v_0 \sim v_{j-1}}e_j Q_{v \sim v_j}$ is also a Berge path with two terminal vertices $v_0$ and $v_j$.  Hence,  $Q^{\prime}\notin \mathcal{P}^{\prime}$, since $\tau(Q^{\prime})=v_j\notin \tau(\mathcal{P}^{\prime})$.
	
	Now we complete the proof of Claim 1  by proving that $\mathcal{P'}\cup\{Q^{\prime}\}$ with $\tau(\mathcal{P'}\cup\{Q^{\prime}\})=\tau(\mathcal{P'})\cup\{v_j\}$  satisfies p1) and p2), which contradicts the maximality of $\mathcal{P'}$.

	Clearly $P\in \mathcal{P'}$ and  $\mathcal{P'}\cup\{Q^{\prime}\}$ satisfies p1).
Suppose that $	v_{t-1},v_t\notin \tau(\mathcal{P'}\cup\{Q^{\prime}\})=\tau(\mathcal{P'})\cup\{v_j\}$
for some $1\le t\le l$. Then $t\neq j$ and $t\neq j+1$. By $v_{t-1}, v_t\notin \tau(\mathcal{P}^{\prime})$ and   property p2) of $\mathcal{P'}$, $v_{t-1}e_t v_t$ is a segment of each Berge path in $\mathcal{P'}$.  By  property p2) of $\mathcal{P'}$, $v_{t-1}e_t v_t$ is  a segment of $Q=Q_{v_0 \sim v_{j-1}}e_j Q_{v_j \sim v}\in  \mathcal{P'}$. By $t\neq j$,  $v_{t-1}e_t v_t$ is either a segment of $Q_{v_0 \sim v_{j-1}}$ or $Q_{v_j \sim v}$. In both cases, $v_{t-1}e_t v_t$ is a segment of $Q^{\prime}=Q_{v_0 \sim v_{j-1}}e_j Q_{v \sim v_j}$.  So $\mathcal{P}^{\prime}\cup\{Q^{\prime}\}$ satisfies p1) and p2).
\QEDB
	
	Let $I:=\{i:v_i\in \tau(\mathcal{P'})\}$  and $\mathcal{K}:=\{e_i:i\in I\cap[1,l]\}\cup\{e_i:i\in (I+1)\cap[1,l]\}$, where $I+1:=\{i+1, i\in I\}$ and $[1,l]:=\{1, \ldots, l\}$. Clearly $\mathcal{K}\subseteq N_{ E(P) }(\tau(\mathcal{P'}))$. Further, we have the following Claim 2:

{\bf Claim 2.}  $N_{ E(P) }(\tau(\mathcal{P'})) =\mathcal{K}$.
	
 {\bf Proof of Claim 2.} If there is an $e_j\in N_{ E(P) }(\tau(\mathcal{P'}))\setminus \mathcal{K}$, then  $e_j\notin \mathcal{K}$.  So by the definition of $\mathcal{K}$ and $I$,  $v_{j-1}\notin \tau(\mathcal{P'})$ and $v_j \notin \tau(\mathcal{P'})$. Hence, by Claim 1,  $e_j\notin N(\tau(\mathcal{P'}))$, which contradicts to  $e_j\in N_{ E(P) }(\tau(\mathcal{P'}))$. So Claim 2 holds.\QEDB

	Since  $v_l=\tau(P)\in \tau(\mathcal{P'})$ and  
$|(I+1)\cap [1,l]|\leq |I|-1$, we have
	\begin{equation}
		\begin{aligned}
			|N_{ E(P) }(\tau(\mathcal{P'}))|&=|\mathcal{K}|\leq |I\cap [1,l]|+|(I+1)\cap [1,l]|\\
			&\leq |I|+|I|-1=2|I|-1
			=2|\tau(\mathcal{P'})|-1. \nonumber
		\end{aligned}
	\end{equation}
 So we finish our proof.
\end{proof}

\begin{Theorem}
	\label{thy}
	Let $\mathcal{H}$  be an  $n$-vertex,  $r$-uniformly connected hypergraph with a longest Berge path of length $k$.  If $k>r\geq 3$, then there is a good set $S$ with either $|S|<n$  or $n=k+1$. Moreover, $\mathcal{S}_r(\mathcal{H})\neq\emptyset$.\end{Theorem}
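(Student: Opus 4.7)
The plan is to split on whether $\mathcal{H}$ contains a Berge cycle of length $k+1$, obtaining the required good set in each case.

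If $\mathcal{H}$ has a Berge cycle $C$ of length $k+1$, Lemma~\ref{le4} gives $V(\mathcal{H})=V(C)$, so $n=|V(C)|=k+1$ and $p_{\mathcal{H}}(e)=k$ for every $e\in E(\mathcal{H})$. Taking $S=V(\mathcal{H})$, one has $|N(S)|=|E(\mathcal{H})|\le\binom{n}{r}=\binom{k+1}{r}=\frac{k+1}{r}\binom{k}{r-1}=(k+1)f_r(k)=|S|f_r(k)$, so $S$ is a good set with $|S|=n=k+1$, and $\mathcal{S}_r(\mathcal{H})\neq\emptyset$.

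Otherwise $\mathcal{H}$ has no Berge cycle of length $k+1$; I claim property $(*)$ of Lemma~\ref{le3} must then hold, namely $N(v)\subseteq E(Q)$ for every longest Berge path $Q$ and every terminal $v$ of $Q$. Granting the claim, fix any longest path $P=v_0e_1v_1\cdots e_kv_k$ and apply Lemma~\ref{le1} with terminal $v_0$ to obtain a nonempty $\mathcal{P}'\subseteq\mathcal{P}(P,v_0)$ with $|N_{E(P)}(\tau(\mathcal{P}'))|\le 2|\tau(\mathcal{P}')|-1$, and set $S=\tau(\mathcal{P}')$. For each $P'\in\mathcal{P}'$, property $(*)$ applied to $P'$ (a longest path with $E(P')=E(P)$) gives $N(\tau(P'))\subseteq E(P')=E(P)$, so $N(S)\subseteq E(P)$, $|N(S)|\le 2|S|-1$, and every $e'\in N(S)\subseteq E(P)$ has $p_{\mathcal{H}}(e')=k$. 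Since $k\ge r+1$ and $r\ge 3$, one checks $f_r(k)=\binom{k}{r-1}/r\ge f_r(r+1)=(r+1)/2\ge 2$, hence $|N(S)|\le 2|S|-1\le f_r(k)|S|$ and $S$ is a good set. Finally $S\subseteq V(P)$ forces $|S|\le k+1\le n$; if $|S|=n$ then $n=k+1$, else $|S|<n$.

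To verify the claim, suppose $(*)$ fails and choose a longest path $P=v_0e_1\cdots e_kv_k$ with (WLOG) $e\in N(v_0)\setminus E(P)$. Since $P$ is longest, $e\subseteq V(P)$ (any vertex of $e\setminus V(P)$ would let one prepend and extend $P$); put $i=\max\{j:v_j\in e\}$. If $i=k$ then $v_0e_1v_1\cdots e_kv_kev_0$ is a Berge cycle of length $k+1$, contradicting the case assumption, so $i<k$. The rotation $P'=v_{i-1}e_{i-1}\cdots e_1v_0ev_ie_{i+1}\cdots e_kv_k$ is a new longest path with bad terminal $v_{i-1}$ and bad edge $e_i\in N(v_{i-1})\setminus E(P')$, where again $e_i\subseteq V(P')=V(P)$. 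Iterating the rotation and choosing at each step a bad edge whose maximum index on the current path is as large as possible, the sequence of these maximum indices is non-decreasing; and a structural analysis of every stabilizing step (where the removed bad edge is forced to lie entirely within $\{v_0,\ldots,v_I\}$, enabling a short Berge cycle on that initial segment to be concatenated with the remaining segment $v_Ie_{I+1}\cdots e_kv_k$) forces the sequence to strictly increase and eventually reach $k$, producing the forbidden $(k+1)$-cycle.

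The main obstacle is precisely this last step: ruling out the possibility that the rotation sequence stabilizes at some $I<k$. Because the rearrangement can return to its starting configuration after two rotations (the natural $P\to P'\to P$ cycle), strict progress must be engineered by a careful choice of bad edge at each step, and the stabilized configuration $e_i\subseteq\{v_0,\ldots,v_I\}$ must be exploited by combining the implied Berge cycle of length $I+1$ on $\{v_0,\ldots,v_I\}$ with the path-segment from $v_I$ to $v_k$ to reconstruct a Berge cycle of length $k+1$.
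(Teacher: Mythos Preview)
Your proof has a genuine gap: the claim that the absence of a Berge $(k{+}1)$-cycle forces property $(*)$ (that $N(v)\subseteq E(Q)$ for every terminal $v$ of every longest path $Q$) is false, and the P\'osa-style rotation you sketch cannot establish it. A concrete counterexample with $r=3$, $k=4$: take $V=\{v_0,\dots,v_4\}$, let $e_1=\{v_0,v_1,v_2\}$, $e_2=\{v_1,v_2,v_3\}$, $e_3=\{v_0,v_2,v_3\}$, $e=\{v_0,v_1,v_3\}$ be the four $3$-subsets of $\{v_0,v_1,v_2,v_3\}$, and add $e_4=\{v_2,v_3,v_4\}$. Then $P=v_0e_1v_1e_2v_2e_3v_3e_4v_4$ is a longest path, and $e\in N(v_0)\setminus E(P)$, so $(*)$ fails; yet there is no Berge $5$-cycle, since such a cycle would use all five vertices and $v_4$ lies in only one hyperedge, hence cannot be a defining vertex of any cycle. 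Your rotation with bad edge $e$ (max index $3$) yields $P'=v_2e_2v_1e_1v_0ev_3e_4v_4$, whose unique bad edge at $v_2$ is $e_3$, again of max index $3$; the next rotation returns to $P$, so the process stabilises at $I=3<k$. Your proposed salvage---gluing the Berge $(I{+}1)$-cycle on $\{v_0,\dots,v_I\}$ to the tail $v_Ie_{I+1}\cdots v_k$---cannot produce a $(k{+}1)$-cycle: a cycle with a pendant path attached at a single vertex is not a cycle, and here the tail terminates at the degree-one vertex $v_4$, so no closure is possible.

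The paper avoids this entirely by never asserting $(*)$. After applying Lemma~\ref{le1}, it tests directly whether $|N(\tau(\mathcal P'))|\le f_r(k)\,|\tau(\mathcal P')|$; if so, $\tau(\mathcal P')$ is the desired good set. If not, a pigeonhole extracts a terminal incident to more than $f_r(k)-2$ edges of $E(\mathcal H)\setminus E(P)$, and a second application of Lemma~\ref{le1} from that terminal produces a longest path \emph{both} of whose terminals enjoy this richness. An index-set argument, powered by the inequality $f_r(k)-2\ge\binom{k/2}{r-1}$ (with separate short treatments of the small cases $(r,k)\in\{(3,5),(3,4)\}$), then forces a Berge $(k{+}1)$-cycle. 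The quantitative threshold $f_r(k)-2$ is what makes the argument close, and it is exactly what your qualitative rotation argument lacks.
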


\begin{proof}
Let $P=v_0 e_1 v_1 e_2 v_2 \cdots v_{k-1} e_k v_k$ be a longest Berge path of length $k$ in $\mathcal{H}$. Denote by $\mathcal{P}(P,v_0)$  the set of Berge paths obtained by rearranging the defining vertices and edges of $P$ with one terminal vertex $v_0$.
By  Lemma \ref{le1}, there is  a nonempty set $\mathcal{P'}\subseteq \mathcal{P}(P,v_0)$ such that $|N_{ E(P) }(\tau(\mathcal{P'}))|\leq 2|\tau(\mathcal{P'})|-1$. Clearly $p(e)=k$ for  each $ e\in N(\tau(\mathcal{P'}))$.
If $|N(\tau(\mathcal{P'}))|\leq f_r(k) |\tau(\mathcal{P'})|$, then $\tau(\mathcal{P'})$ is a good set  and $\mathcal{S}_r(\mathcal{H})\neq \emptyset$.
 Moreover,  $|\tau(\mathcal{P'})|<n$ by  $v_0\notin \tau(\mathcal{P'})$. Hence, we assume that
	$|N(\tau(\mathcal{P'}))|>f_r(k)|\tau(\mathcal{P'})|$.
Then by Lemma \ref{le1}, we have
\begin{equation}
	\begin{aligned}
		|N_{E(\mathcal{H})\setminus  E(P) }(\tau(\mathcal{P'}))|&=|N(\tau(\mathcal{P'}))|-|N_{ E(P) }(\tau(\mathcal{P'}))|> (f_r(k)-2)|\tau(\mathcal{P'})|+1.
	\end{aligned}
\end{equation}
Hence, there is at least one vertex $v_a$ in $\tau(\mathcal{P'})$  incident to more than $f_r(k)-2$ edges in $E(\mathcal{H})\setminus  E(P) $.
So there is a Berge path $P^{\prime}\in {\mathcal{P}}(P, v_0)$ with the terminal vertex $v_a$.  Let $\mathcal{P}(P',v_a)$  be the set of Berge paths obtained by rearranging the defining vertices and edges of $P^{\prime}$ with one terminal vertex $v_a$. By
Lemma \ref{le1}, there is a nonempty set $\mathcal{P''}\subseteq \mathcal{P}(P',v_a)$ such that $|N_{ E(P) }(\tau(\mathcal{P''}))|\leq 2|\tau(\mathcal{P''})|-1$. If $|N(\tau(\mathcal{P''}))|\leq f_r(k) |\tau(\mathcal{P''})|$, then $\tau(\mathcal{P''})$ is a good set.
Moreover, by $v_0\notin \tau(\mathcal{P''})$,  we have $|\tau(\mathcal{P''})|<n$. So the assertion holds.
Hence, we assume that 	$|N(\tau(\mathcal{P''}))|>f_r(k) |\tau(\mathcal{P''})|$.
Then
\begin{equation}
	\begin{aligned}
		|N_{E(\mathcal{H})\setminus  E(P) }(\tau(\mathcal{P''}))|&=|N(\tau(\mathcal{P''}))|-|N_{ E(P) }(\tau(\mathcal{P''}))|> (f_r(k)-2)|\tau(\mathcal{P''})|+1.
	\end{aligned}
\end{equation}
Hence, there is at least one vertex $v_b$ in $\tau(\mathcal{P^{\prime\prime}})$  incident to more than $f_r(k)-2$ edges in $E(\mathcal{H})\setminus  E(P) $.
So there is a Berge path $P^{\prime\prime}\in {\mathcal{P}}(P^{\prime}, v_a)$ with the other terminal vertex $v_b$.
 Hence, there is  a Berge Path $P^{\prime\prime}$ with two terminal vertices $v_a$ and $v_b$  which are incident to
   more than $f_r(k)-2$ edges in $E(\mathcal{H})\setminus  E(P) $.
Without loss of generality, we may assume that two terminal vertices $v_0$ and $v_k$ of $P$  are incident to more than $f_r(k)-2$ edges in $E(\mathcal{H})\setminus  E(P) $.

If there exists an	edge $ h\in E(\mathcal{H})\setminus  E(P) $ with $\{v_0,v_k\}\subseteq h$, then there is a Berge cycle of $k+1$ edges: $C:=v_0 e_1 v_1 \dots v_{k-1} e_k v_k h v_0$. Thus, by Lemma \ref{le4},  we have $n=|V(\mathcal{H})|=|V(C)|=k+1$.  Then by $k>r$ and $|N(V(\mathcal{H}))| \le |E(\mathcal{H})|\le \binom{k+1}{r}=f_r(k)|V(\mathcal{H})|$,   $V(\mathcal{H})$ is a good set with $n=k+1$. 
Hence, we may assume that either $v_0\notin h$ or $v_k\notin h$ for each edge $ h\in E(\mathcal{H})\setminus  E(P) $.
Note that for each $e\in N_{E(\mathcal{H})\setminus E(P)}(\tau(\mathcal{P}))$, there must be $e\subseteq V(P)$. Otherwise, we can extend $P$ to a longer Berge path by adding $e$ and a vertex in $e\setminus V(P)$,  which contradicts the maximality of $P$.
Let
$$I=\{i: v_i\in \cup\{e: \ e\in N_{E(\mathcal{H})\setminus  E(P) }(v_0)\}\}\setminus \{0\}$$
  and $$J=\{j: v_j\in \cup\{e:\ e\in N_{E(\mathcal{H})\setminus  E(P) }(v_k)\}\}\setminus\{k\}.$$
    Then  $I\subseteq [1,k-1]$, $J\subseteq [1,k-1]$, and
\begin{equation}
	\label{i2}
	|I\cup (J+1)|\leq |[1,k]|=k,\nonumber
\end{equation}
where $J+1:=\{j+1: j\in J\}$ and $[1,k-1]=\{1, \ldots, k-1\}$.
Further we have the following Claim.

 {\bf Claim:}  Either  $I\cap (J+1)\neq \emptyset$ or  there is  a Berge cycle of length $k+1$ in $\mathcal{H}$.

{\bf Proof of Claim.}  We consider the following three cases.

\setcounter{case}{0}
\begin{case}
	$k\geq r+1\geq 5$, or $k\geq r+3=6$.
\end{case}

Then it is directly to prove  (see Appendix) that
\begin{equation}\label{inequality}
	f_r(k)-2\geq \binom{\frac{k}{2}}{r-1}.
\end{equation}
So we have
\begin{equation}
	\label{eqn}
	|N_{E(\mathcal{H})\setminus E(P) }(v_0)|>\binom{\frac{k}{2}}{r-1},\ \ |N_{E(\mathcal{H})\setminus E(P) }(v_k)|>\binom{\frac{k}{2}}{r-1}.
\end{equation}
Hence, $\cup\{e: e\in N_{E(\mathcal{H})\setminus  E(P) }(v_0)\}$  consists of more than $\frac{k}{2}$ vertices other than $v_0$, and
  $ \cup \{e: \ e\in N_{E(\mathcal{H})\setminus  E(P) }(v_k)\}$ consists of more than $\frac{k}{2}$ vertices other than $v_k$. By the maximality of $P$, all these vertices must be in $V(P)$.
So
\begin{equation}
	\label{i1}
	|I|>\frac{k}{2},\ \ |J|> \frac{k}{2}.
\end{equation}
Hence by \eqref{i1} and \eqref{i2}, we have 
\begin{equation}
		|I\cap (J+1)|=|I|+|J+1|-|I\cup (J+1)|
		\geq |I|+|J|-k
		> \frac{k}{2}+\frac{k}{2}-k= 0.\nonumber
\end{equation}
So $I\cap (J+1)\neq\emptyset$.

\begin{case}
	$k= r+2=5$.
\end{case}
It is  easy to see  that
\begin{equation}
	f_r(k)-2= \frac{10}{3}-2=\frac{4}{3}.
\nonumber
\end{equation}
So we have
\begin{equation}
	|N_{E(\mathcal{H})\setminus E(P) }(v_0)|,|N_{E(\mathcal{H})\setminus E(P) }(v_k)|\ge 2
\end{equation}
Hence  $\cup\{e: e\in N_{E(\mathcal{H})\setminus  E(P) }(v_0)\}$  consists of more than  two 
vertices other than $v_0$,  and   $\cup\{e: e\in N_{E(\mathcal{H})\setminus  E(P) }(v_k)\}$  consists of  more than two vertices other than $v_k$. Furthermore, by the maximality of $P$, all these vertices must be in $V(P)$.
Hence,
\begin{equation}
	\label{i11}
	|I|\ge 3,\ \ |J|\geq 3.
\end{equation}
By \eqref{i11} and \eqref{i2}, 
\begin{equation}
	\begin{aligned}
		|I\cap (J+1)|&=|I|+|J+1|-|I\cup (J+1)|
		\geq |I|+|J|-k
		\geq 3+3-5= 1. \nonumber
	\end{aligned}
\end{equation}
So $I\cap (J+1)\neq\emptyset$.

\begin{case}
	$k= r+1=4$.
\end{case}
Then 
	$f_r(k)-2=0,$ 
 which implies that
\begin{equation}
	|N_{E(\mathcal{H})\setminus E(P) }(v_0)|>0\  \text{and} \ \ |N_{E(\mathcal{H})\setminus E(P) }(v_4)|>0.
\end{equation}
Hence  $\cup\{e: e\in N_{E(\mathcal{H})\setminus  E(P) }(v_0)\}$   consists of at least two vertices other than $v_0$,  $\cup\{e: e\in N_{E(\mathcal{H})\setminus  E(P) }(v_k)\}$  consists of at least two vertices other than $v_4$. Then by  the maximality of $P$, all these vertices must be in $V(P)$.
So
\begin{equation}
	\label{i111}
	|I|\ge 2,\ \ \ |J|\geq 2.
\end{equation}
By \eqref{i111} and  \eqref{i2},
\begin{equation}
		|I\cap (J+1)|=|I|+|J+1|-|I\cup (J+1)|
		\geq |I|+|J|-k\geq 2+2-4= 0.
\end{equation}

By $k=4$ and  the definition of $I$ and $J$,  $I\subseteq \{1,2,3\}$ and $J+1\subseteq \{2,3,4\}$.  Hence if $1\notin I$ or $3\notin J$,  then $I\cup (J+1) \subseteq \{2,3,4\}$ or $I\cup (J+1) \subseteq \{1,2,3\}$. So $|I\cap (J+1)|=|I|+|J+1|-|I\cup (J+1)|\geq |I|+|J|-3\geq  1$.  If $1\in I$  and $3\in J$, then there exists an edge $e'_1\in N_{E(\mathcal{H})\setminus E(P) }(v_0)$ with  $\{v_0,v_1\}\subseteq e'_1$. In addition, note that $\{v_0,v_1\}\subseteq e_1$, and $e_1,e'_1\subseteq \{v_0,v_1,v_2,v_3\}$.  So $\{e_1,e'_1\}=\{\{v_0,v_1,v_2\},\{v_0,v_1,v_3\}\}$.
Similarly, there exists  an edge $e'_4\in N_{E(\mathcal{H})\setminus E(P) }(v_4)$ containing both $v_3$ and $v_4$. So $\{e_4,e'_4\}=\{\{v_1,v_3,v_4\},\{v_2,v_3,v_4\}\}$.
Hence, $\mathcal{H}$ contains a Berge cycle of $k+1=5$ edges, $v_0 \{v_0,v_1,v_2\} v_2 e_3 v_3 \{v_2,v_3,v_4\} v_4 \{v_1,v_3,v_4\} v_1 \{v_0 v_1 v_3\} v_0$.
Therefore, the claim holds.
\QEDB

By Claim, If $\mathcal{H}$ has a Berge cycle of length $k+1$, then by Lemma \ref{le4},  $V(\mathcal{H}) $ is a good set  and $n=k+1$.    Now we assume that   $I\cap (J+1)\neq \emptyset$. Then there is a $t\in I\cap (J+1)$. Hence, there exists  two  edges $e, h\in E(\mathcal{H})\setminus  E(P) $ with $\{v_0,v_t\}\subseteq e$ and $\{v_k, v_{t-1}\}\subseteq h$. Hence, $\mathcal{H}$ contains a Berge cycle of $k+1$ edges, $v_0 e_1 v_1 \dots v_{t-1} h  v_k e_k v_{k-1}\dots v_t e v_0$. By Lemma \ref{le4},  $V(\mathcal{H}) $ is a good set  and $n=k+1$.
\end{proof}

\section{Proof of Theorem~\ref{th3conn}}

In order to prove Theorem~\ref{th3conn}, we also need the following result

\begin{Lemma}[Gy\H{o}ri, Lemons, Salia, Zamora \cite{gyHori2021structure}]
	\label{le6}
	Let $\mathcal{H}$ be an $r$-uniform hypergraph with $r\geq 4$ and $|V(\mathcal{H})|=r+1>|E(\mathcal{H})|\geq 2$. Then every pair of vertices of $\mathcal{H}$ are joined by a Berge path of length $|E(\mathcal{H})|$.
\end{Lemma}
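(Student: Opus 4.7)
The plan is to use Hall's marriage theorem to reduce the existence of the desired Berge path to a combinatorial claim about choosing its defining vertices.

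First I shall observe that since $|V(\mathcal{H})|=r+1$, every $r$-edge of $\mathcal{H}$ omits exactly one vertex, and distinct edges omit distinct vertices; thus the edges $e_1,\ldots,e_m$ (where $m=|E(\mathcal{H})|$) correspond bijectively to a set $A=\{a_1,\ldots,a_m\}\subseteq V(\mathcal{H})$ of ``missing vertices''. A Berge path of length $m$ from $u$ to $w$ amounts to choosing distinct defining vertices $u_0=u,u_1,\ldots,u_m=w$ in $V(\mathcal{H})$ together with a bijection $\sigma:\{1,\ldots,m\}\to\{1,\ldots,m\}$ such that $a_{\sigma(j)}\notin\{u_{j-1},u_j\}$ for every $j$.

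For a fixed valid vertex sequence, finding $\sigma$ will be recast as finding a perfect matching in the bipartite graph $B$ between the gaps $\{1,\ldots,m\}$ and the edges, with gap $j$ joined to $e_i$ iff $a_i\notin\{u_{j-1},u_j\}$. The key computation is that $e_i\notin N_B(X)$ precisely when $a_i\in\bigcap_{j\in X}\{u_{j-1},u_j\}$; since the $u_i$ are pairwise distinct, this intersection equals $\{u_{j-1},u_j\}$ when $|X|=1$, equals the singleton $\{u_j\}$ when $X=\{j,j+1\}$, and is empty otherwise. Consequently $|N_B(X)|\ge m-2$, $m-1$, or $m$ in these three respective cases, and Hall's condition $|N_B(X)|\ge|X|$ holds automatically whenever $m\ge 3$, regardless of the chosen vertex sequence.

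Thus, for $m\ge 3$ it suffices to select any $m-1$ distinct internal vertices from $V(\mathcal{H})\setminus\{u,w\}$ (possible since $r-1\ge m-1$) and invoke Hall. For the remaining case $m=2$, the Hall condition applied to $X=\{1,2\}$ forces $u_1\notin A$; but $|V(\mathcal{H})\setminus A|=r-1\ge 3$, so one can always find $u_1\in V(\mathcal{H})\setminus(A\cup\{u,w\})$ by a direct count. The main obstacle is the Hall-condition case analysis, and in particular the observation that three or more pair-sets $\{u_{j-1},u_j\}$ of distinct $u_i$ cannot share a common vertex---this is what allows the slack $m-|X|$ to absorb every potential conflict once $m\ge 3$. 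After that, the base case $m=2$ is dispatched by a short counting argument, which is precisely where the hypothesis $r\ge 4$ enters (to guarantee that $V(\mathcal{H})\setminus A$ has at least three elements).
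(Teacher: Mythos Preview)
Your argument is correct. Note, however, that the paper does not give its own proof of this lemma: it is quoted from Gy\H{o}ri, Lemons, Salia and Zamora \cite{gyHori2021structure}, so there is nothing in the present paper to compare against. Your Hall's-theorem reduction---identifying each edge with its unique missing vertex in the $(r+1)$-set, fixing a defining-vertex sequence $u_0,\dots,u_m$, and then checking Hall's condition via the observation that any fixed vertex lies in at most two (necessarily consecutive) gap-pairs $\{u_{j-1},u_j\}$---yields a clean self-contained proof. The case split you isolate is the right one: for $m\ge 3$ Hall's condition holds for \emph{every} choice of internal vertices, while for $m=2$ one must additionally pick $u_1\notin A$, and the count $|V(\mathcal{H})\setminus(A\cup\{u,w\})|\ge (r+1)-2-2=r-3\ge 1$ is exactly where the hypothesis $r\ge 4$ is consumed.
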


\begin{Corollary}
	\label{coro-path}
	Let $\mathcal{H}$ be an $r$-uniform hypergraph with $|V(\mathcal{H})|=r+1$ and $1\leq |E(\mathcal{H})|< r$. Then for every vertex $v\in V(\mathcal{H})$, $\mathcal{H}$ contains a Berge path of length $|E(\mathcal{H})|$ starting at $v$.
\end{Corollary}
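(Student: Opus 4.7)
The plan is to read off the result from Lemma~\ref{le6} after isolating the small cases that lemma does not cover. Set $m:=|E(\mathcal{H})|$. Because $|V(\mathcal{H})|=r+1$ and each edge has size $r$, every edge is the complement of a single vertex; write $e_i=V(\mathcal{H})\setminus\{u_i\}$ with the $u_i$ pairwise distinct. I want to produce, for each $v\in V(\mathcal{H})$ lying in some edge, a Berge path of length $m$ beginning at $v$.

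The main step handles $m\ge 2$ with $r\ge 4$. Since $m\le r-1$, the inequality $|V(\mathcal{H})|=r+1>m\ge 2$ holds, so the hypotheses of Lemma~\ref{le6} are satisfied. Therefore every pair of vertices is joined by a Berge path of length $m$; taking any $w\ne v$ and applying Lemma~\ref{le6} to the pair $\{v,w\}$ produces a Berge path of length $m$ whose first vertex is $v$, as desired.

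Two degenerate cases must be handled directly. If $m=1$, then for each $v\in e_1$ one simply picks any $w\in e_1\setminus\{v\}$, which is possible because $|e_1|=r\ge 3$, and outputs the one-edge path $v\,e_1\,w$; the single vertex outside $e_1$ is isolated and so cannot begin any Berge path, so the conclusion is only meaningful on $e_1$ in this sub-case. If $r=3$ and $m=2$, then $|e_1\cap e_2|=(r+1)-2=2$ and every vertex lies in at least one of $e_1,e_2$ (since $u_1\ne u_2$); assuming $v\in e_1$, pick $v_1\in(e_1\cap e_2)\setminus\{v\}$ (nonempty because $|e_1\cap e_2|=2$), then $v_2\in e_2\setminus\{v,v_1\}$ (nonempty because $|e_2|=3$ and only two vertices are forbidden). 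The resulting sequence $v\,e_1\,v_1\,e_2\,v_2$ is a Berge path of length $2$ starting at $v$.

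The real content of the corollary lives inside Lemma~\ref{le6}; the only genuine task is to notice that Lemma~\ref{le6}'s hypotheses ($r\ge 4$ and $m\ge 2$) miss two small regimes, and to dispatch them by short, explicit constructions. The main obstacle I expect is merely a bookkeeping one, namely verifying that these boundary constructions indeed deliver a path that is distinct-edged, distinct-vertexed, and has first vertex $v$; a minor subtlety is that when $m=1$ the unique vertex outside $e_1$ is isolated, so the statement must be read with the tacit assumption that $v$ belongs to some edge.
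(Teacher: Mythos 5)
Your proof is correct and follows essentially the same route as the paper: the main case $r\ge 4$, $|E(\mathcal{H})|\ge 2$ is read off from Lemma~\ref{le6}, and the boundary cases ($|E(\mathcal{H})|=1$, and $r=3$ with two edges), which the paper simply declares to hold ``clearly'', are settled by your explicit one- and two-edge constructions. Your caveat about the vertex outside the unique edge when $|E(\mathcal{H})|=1$ is a fair observation about the literal statement (the paper glosses over it), but it does not affect how the corollary is applied in the proof of Theorem~\ref{th3conn}.
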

\begin{proof}
If $r\geq 4$ and $|E(\mathcal{H})|\geq 2$,  then by Lemma \ref{le6}, the assertion holds. If  $|E(\mathcal{H})|=1$ or $2\leq r\leq 3$, the assertion clearly holds.
\end{proof}

\noindent\emph{Proof of} \textbf{Theorem \ref{th3conn}}.


We prove \eqref{eq0} by the induction on $n$. If $n=r$, then $|V(\mathcal{H})|=r$ and $E(\mathcal{H})=\{e\}$. So  $p_{\mathcal{H}}(e)=1$, $f_r(p_{\mathcal{H}}(e))=\frac{1}{r}$ and
$\sum_{e\in E(\mathcal{H})}\frac{1}{f_r(p_{\mathcal{H}}(e))}=r$. Hence \eqref{eq0} holds.
      Suppose that  \eqref{eq0}  holds for all $n'<n$.
            By Theorems \ref{thx} and \ref{thy},  we have $\mathcal{S}_r(\mathcal{H})\neq\emptyset$.  There is an  $S\in \mathcal{S}_r(\mathcal{H})$. Then
\begin{equation}
	\begin{aligned}
		\label{eq1}
		\sum_{e\in E(\mathcal{H})}\frac{1}{f_r(p_{\mathcal{H}}(e))}=&\sum_{e\in E(\mathcal{H}\setminus S)}\frac{1}{f_r(p_{\mathcal{H}}(e))} + \sum_{e\in N(S)}\frac{1}{f_r(p_{\mathcal{H}}(e))}.\\
	\end{aligned}
\end{equation}
Note that
 $p_{\mathcal{H}}(e)\geq p_{\mathcal{H}\setminus S}(e)$ for each $e\in  E(\mathcal{H} \setminus S)$.  By the monotonicity of the function $f_r(x)$ and the induction hypothesis,
\begin{equation}
	\label{eq2}
	\sum_{e\in E(\mathcal{H}\setminus S)}\frac{1}{f_r(p_{\mathcal{H}}(e))}\leq \sum_{e\in E(\mathcal{H}\setminus S)}\frac{1}{f_r(p_{\mathcal{H}\setminus S}(e))}\leq n-|S|.
\end{equation}
On the other hand, by the definition of $S\in \mathcal{S}_r(\mathcal{H})$,   $p_{\mathcal{H}}(e)=k$ for each $e\in N(S)$,  where $k>1$ is the maximum length of Berge path in $\mathcal{H}$. Hence by  the definition of $\mathcal{S}_r(\mathcal{H})$,  we have
\begin{equation}
	\label{eq3}
	\sum_{e\in N(S)}\frac{1}{f_r(p_{\mathcal{H}}(e))}=\sum_{e\in N(S)}\frac{1}{f_r(k)}=\frac{|N(S)|}{f_r(k)}\leq |S|.
\end{equation}
Hence by follows from  \eqref{eq1}, \eqref{eq2} and \eqref{eq3},  we have \eqref{eq0} for $n$.

Furthermore, if $n=r+1$ and $|E(\mathcal{H})|\in \{1, 2, \ldots, r-1, r+1\}$ for $r\ge 3$, then for each $e\in E(\mathcal{H})$,  $$p_{\mathcal{H}}(e)=\left\{\begin{array}{ll}
|E(\mathcal{H})|, & \text{for}\ |E(\mathcal{H})|\le r-1;\\
r, &\text{for}\ |E(\mathcal{H})|= r+1.
   \end{array}\right.$$
 Hence, for each $e\in E(\mathcal{H})$,
    $$   f_r(p_{\mathcal{H}}(e))=\left\{\begin{array}{ll}
    \frac{|E(\mathcal{H})|}{r+1}, &\text{ for} \ |E(\mathcal{H})|\le r-1;\\
    \frac{1}{r}\binom{r}{r-1}, &\text{for}\ |E(\mathcal{H})|\le r+1.
    \end{array}\right.$$
 Therefore,
 $$
 \sum_{e\in E(\mathcal{H})}\frac{1}{f_r(p_{\mathcal{H}}(e))}=\left\{\begin{array}{ll}
 r+1, & \text{for}\  |E(\mathcal{H})|\le r-1;\\
  (r+1)\frac{1}{r}\binom{r}{r-1}, & \text{for}\  |E(\mathcal{H})|= r+1.\end{array}\right.$$
So	 \eqref{eq0} becomes equality.

If  $n\neq r+1
$ and $|E(\mathcal{H})|=\binom{n}{r}$, then for each $e\in E(\mathcal{H})$,
$$p_{\mathcal{H}}(e)=\left\{\begin{array}{ll}
1, & \text{for}\ n=r;\\
n-1, &\text{for}\ n\ge r+2.
   \end{array}\right.$$
   Hence, for each $e\in E(\mathcal{H})$,
   $$   f_r(p_{\mathcal{H}}(e))=\left\{\begin{array}{ll}
    \frac{1}{r}, &\text{ for} \ n=r;\\
    \frac{1}{r}\binom{n}{r-1}, &\text{for}\ n\ge r+2.
    \end{array}\right.$$
     Therefore,
 $$
 \sum_{e\in E(\mathcal{H})}\frac{1}{f_r(p_{\mathcal{H}}(e))}=\left\{\begin{array}{ll}
 r, & \text{for}\  n=r;\\
  n, & \text{for}\  n\ge r+2.\end{array}\right.$$
  So	 \eqref{eq0} becomes equality.

   Conversely,   for $n\ge r\ge 3,$   we use the induction on $n$ to  prove the following assertion that if
	\begin{equation}
		\label{eqlt}
		\sum_{e\in E(\mathcal{H})}\frac{1}{f_r(p_{\mathcal{H}}(e))}=n.
	\end{equation}
	then  one of the following holds:
	\begin{enumerate}[(i)]
		\item $n=r+1$ and $|E(\mathcal{H})|\in \{1, 2, \ldots, r-1, r+1\}$.
		\item $n\neq r+1$ and $|E(\mathcal{H})|=\binom{n}{r}$.
	\end{enumerate}

Clearly, if $n=r$, then $|E(\mathcal{H})|=1$. So the assertion holds. If $n=r+1$, it is easy to see that $|E(\mathcal{H})|\neq r$.  Otherwise,  $|E(\mathcal{H})|= r$, and $p_{\mathcal{H}}(e)=r$ for each $e\in E( \mathcal{H})$. So By \eqref{eqlt},
$$n=\sum_{e\in E(\mathcal{H})}\frac{1}{f_r(p_{\mathcal{H}}(e))}=\sum_{e\in E(\mathcal{H})}\frac{1}{f_r(r)}=r\neq n,$$
which is a contradiction. Hence, (i) holds.

Suppose that the assertion holds for less than $n$. We consider the case of $n$. Then we have the following Claim:

      {\bf Claim: } $\mathcal{S}_r(\mathcal{H})= \{V(\mathcal {H})\}$.

{\bf Proof of Claim.} Suppose that there is an  $|S|<|V(\mathcal{H})|=n$  with an $S\in \mathcal{S}_r(\mathcal{H})$.
By \eqref{eqlt} and \eqref{eq2}, we have
\begin{equation}
	\label{eq14}
	\sum_{e\in E(\mathcal{H}\setminus S)}\frac{1}{f_r(p_{\mathcal{H}\setminus S}(e))}= n-|S|=|V(\mathcal{H}\setminus S)|,
\end{equation}
and
\begin{equation}
	\label{eq26}
	p_{\mathcal{H}}(e)=p_{\mathcal{H}\setminus S}(e), \forall e\in  E(\mathcal{H}\setminus S).
\end{equation}
Let $\mathcal{C}_1,\mathcal{C}_2,\dots,\mathcal{C}_t$ be the components of $\mathcal{H}\setminus S$. Then
\begin{equation}
	\label{eq28}
	\sum_{e\in E(\mathcal{H}\setminus S)}\frac{1}{f_r(p_{\mathcal{H}}(e))}=\sum_{i=1}^{t}\sum_{e\in E(\mathcal{C}_i)}\frac{1}{f_r(p_{\mathcal{H}}(e))}\leq \sum_{i=1}^{t}|V(\mathcal{C}_i)|=|V(\mathcal{H}\setminus S)|.
\end{equation}
Hence 
by \eqref{eq14} and \eqref{eq28}, 
\begin{equation}
	\sum_{e\in E(\mathcal{C}_i)}\frac{1}{f_r(p_{\mathcal{H}\setminus S}(e))}= |V(\mathcal{C}_i)|, \ \ i=1, \ldots, t.
\end{equation}
By the connectivity of $\mathcal{H}$, there is an edge $h_i$  with  $h_i\cap V(\mathcal{C}_i)\neq \emptyset$ and $h_i\cap S\neq\emptyset$. There exist two vertices $u_i$ and $v_i$ with  $u_i\in h_i\cap V(\mathcal{C}_i)$ and  $v_i\in h\cap S$. By \eqref{eq28}, $|E(\mathcal{C}_i)|\ge 1$, which implies that $r\le |V(\mathcal{C}_i)|\leq |V(\mathcal{H}\setminus S)|=n-|S|<n$. Hence, by   the induction hypothesis, $\mathcal{C}_i$ satisfies either (i) or (ii).  If (i) holds, then either  $|E(\mathcal{C}_i)|<r$ or $|E(\mathcal{C}_i)|=r+1$.  When $|E(\mathcal{C}_i)|<r$, by Corollary \ref{coro-path}, there is  a Berge path $P_i$ starting from $u_i$, whose defining edges are $E(\mathcal{H}\setminus S)$. So $v_i h_i P_i$ is a longer Berge path  than $P_i$. So $p_{\mathcal{H}}(e)>p_{\mathcal{H}\setminus S}(e)$ for each $e\in E(P_i)\subseteq E(\mathcal{H}\setminus S)$,  which contradicts to  \eqref{eq26}.  When $|E(\mathcal{C}_i)|=r+1$, it is easy to see that there is an edge $e\in E(P)\subseteq E(\mathcal{H}\setminus S)$ such that $p_{\mathcal{H}}(e)>p_{\mathcal{H}\setminus S}(e)$, which contradicts to \eqref{eq26}.
If (ii) holds, then
	$|E(\mathcal{C}_i)|=\binom{|V(\mathcal{C}_i)|}{r}$.
Hence,  $\mathcal{C}_i$ contains a Berge path $P_i$ which has $|V(\mathcal{C}_i)|$ defining vertices, and starts from $u_i$. So $v_i h_i P_i$ is a Berge path with more defining vertices than $P_i$. Then $p_{\mathcal{H}}(e)>p_{\mathcal{H}\setminus S}(e)$ for each $e\in E(P_i)\subseteq E(\mathcal{H}\setminus S)$, which contradicts to \eqref{eq26}. Therefore, the claim holds. \QEDB

Let $k$ be the length of the longest Berge path in $\mathcal{H}$.  By the definition of $\mathcal{S}_r(\mathcal{H})= \{V(\mathcal {H})\}$,  
$p_{\mathcal{H}}(e)=k$ for each $e\in E(\mathcal{H})$. Then By \eqref{eqlt}, $|E(\mathcal{H})|= f_r(k)|V(\mathcal{H})|=f_r(k)n\ge 1.$

If $n=r$, then $|E(\mathcal{H})|=1=\binom{n}{r}$ and (ii) holds. If $n=r+1$, then $|E(\mathcal{H})|\neq r$. Otherwise, $p_{\mathcal{H}}(e)=r$ and
$$\sum_{e\in E(\mathcal{H})}\frac{1}{f_r(p_{\mathcal{H}}(e))}=\sum_{e\in E(\mathcal{H})}\frac{1}{f_r(r)}=r\neq n,$$
which is a contradiction. So (i) holds.
Hence, we assume that $n\ge r+2$. Furthermore, we can prove that $k>r$.

If fact,  if $k=r$, then by  Claim and Theorem \ref{thx}, $n=|V(\mathcal{H})|\ge r+1$ and $|E(\mathcal{H})|=|N(V(\mathcal{H}))|\le r+1$. On the other hand, $|E(\mathcal{H})|=f_r(k)n= f_r(r)n=n\ge r+2.$ It is a contradiction. If $1<k<r$, then  by  Claim and Theorem \ref{thx}, $n=|V(\mathcal{H})|\ge r+1$ and $|E(\mathcal{H})|=|N(V(\mathcal{H}))|\le k$. On the other hand, $|E(\mathcal{H})|=f_r(k)n= f_r(k)n=n\ge r+2.$ It is a contradiction.
If $k=1$, then $\mathcal{H}$ is disconnected, which is a contradiction.

  So $k>r$. Then by Theorem \ref{thy} and Claim, we have  $n=k+1$, and  $|E(\mathcal{H})|=\frac{1}{r}\binom{k}{r}=\binom{n}{r}$. Hence $\mathcal{H}$ is a complete hypergraph on $n=k+1\geq r+2$ vertices. So (ii) holds.
\QEDB

\appendix

\section{Appendix }

In the appendix, we present a proof of \eqref{inequality}, i.e.,
if $k\geq r+1\geq 5$, or $k\geq r+3=6$, then
\begin{equation}\nonumber
	f_r(k)-2\geq \binom{\frac{k}{2}}{r-1}.
\end{equation}

{\bf Proof.} For an positive integer $m$ and a real number $x\geq m$, denote
\begin{equation}
	(x)_m:=x(x-1)\dots (x-m+1).\nonumber
\end{equation}
Let
\begin{equation}
	g(k)=f_r(k)-\binom{\frac{k}{2}}{r-1}=\frac{1}{r}\binom{k}{r-1}-\binom{\frac{k}{2}}{r-1}.\nonumber
\end{equation}
If $r=3$ and $k\ge 6$, then
$$f_r(k)-2-\binom{\frac{k}{2}}{r-1}=\frac{1}{3}\binom{k}{2}-2-\binom{\frac{k}{2}}{2}=\frac{(k+8)(k-6)}{24}\ge 0.$$
So the assertion holds.
 Hence it suffices to show that $g(k)\geq 2$ for $k\ge r+1\ge 5$. We consider the following two cases.

\setcounter{case}{0}
\begin{case}
	$\frac{k}{2}<r-1$.
\end{case}
Then by $k\geq r+1\ge 5$,
\begin{equation}
	g(k)=\frac{1}{r}\binom{k}{r-1}\geq g(r+1)=\frac{1}{r}\binom{r+1}{r-1}=\frac{r+1}{2}\geq 2. \nonumber
\end{equation}

\begin{case}
	$\frac{k}{2}\geq r-1$, i.e. $k\geq 2r-2$.
\end{case}

Then,  by $k\geq r+1\ge 5$,
\begin{equation}
	\begin{aligned}
g(k)-g(k-1)&=\left(\frac{1}{r}\binom{k}{r-1}-\binom{\frac{k}{2}}{r-1}\right)-\left(\frac{1}{r}\binom{k-1}{r-1}-\binom{\frac{k-1}{2}}{r-1}\right)\\
		&=\frac{1}{r!}[(k)_{r-1}-(k-1)_{r-1}]-\frac{1}{(r-1)!}[(\frac{k}{2})_{r-1}-(\frac{k-1}{2})_{r-1}]\\
		&\ge\frac{1}{r!}\frac{r-1}{k-r+1}(k-1)_{r-1}-\frac{1}{(r-1)!}\frac{r-\frac{3}{2}}{\frac{k}{2}-r+1}(\frac{k}{2}-1)_{r-1}\\
&>0,	
\end{aligned} \nonumber
\end{equation}
where the last inequality follows from that
\begin{equation}
	\begin{aligned}
\frac{\frac{1}{r!}\frac{r-1}{k-r+1}(k-1)_{r-1}}{\frac{1}{(r-1)!}\frac{r-\frac{3}{2}}{\frac{k}{2}-r+1}(\frac{k}{2}-1)_{r-1}}&=\frac{1}{r}\cdot
\frac{r-1}{r-\frac{3}{2}}\cdot \frac{(k-1)(k-2)\dots (k-r+2)}{(\frac{k}{2}-1)(\frac{k}{2}-2)\dots (\frac{k}{2}-r+2)} 
		\geq \frac{1}{r}\cdot\frac{r-1}{r-\frac{3}{2}}\cdot 2^{r-2}
		>1.
	\end{aligned} \nonumber
\end{equation}
Hence,  $g(k)\ge g(k-1)\geq \cdots\ge  g(2r-2)$ for $k\geq r+1\geq 5$.
Moreover,
\begin{equation}
		g(2r-2)\geq \frac{1}{r}\binom{2r-2}{r-1}-1=\frac{2r-2}{r-1}\cdot\frac{2r-3}{r-2}\cdot\dots\cdot\frac{r+1}{2}-1
		\geq 2^{r-2}-1
		\geq 2.
\nonumber
\end{equation}
So the assertion holds.

\begin{thebibliography}{100}

\bibitem{balogh2023} J.~Balogh, D.~Brada\u{c}, B.~Lidick\`{y}, Weighted Tur\'{a}n theorems with applications to Ramsey-Tur\'{a}n type
of problems, 2023, arXiv preprint arXiv:2302.07859.
\bibitem{balogh2022} J.~Balogh, C.~Chen, G.~McCourt, C.~Murley, Ramsey-Tur\'{a}n problems with small independence numbers,
2022, arXiv preprint arXiv:2207.10545.

\bibitem{bradac2022}D.~Brada\u{c}, A generalization of Tur\'{a}n's theorem, 2022, arXiv preprint arXiv:2205.08923.

\bibitem{erdHos1959maximal}  P.~Erd{\H{o}}s, T.~Gallai, On maximal paths and circuits of graphs, Acta Math. Hungar. 10(3-4) (1959) 337--356.

\bibitem{berge1973graphs} C.~Berge, Graphs and hypergraphs, 1973.

\bibitem{davoodi2018erdHos} A.~Davoodi, E.~Gy\H{o}ri, A.~Methuku, C.~Tompkins, An Erd{\H{o}}s--Gallai type theorem for uniform hypergraphs, European J. Combin. 69 (2018) 159--162.

\bibitem{gyHori2021structure} E.~Gy\H{o}ri, N.~Lemons, N.~Salia, O. Zamora, The Structure of Hypergraphs without long Berge cycles, J. Combin. Theory Ser. B 148 (2021) 239--250.

\bibitem{gyHori2016hypergraph} E.~Gy\H{o}ri, G.~Y.~Katona, N.~Lemons, Hypergraph extensions of the Erd{\H{o}}s-Gallai theorem, European J. Combin. 58 (2016) 238--246.



\bibitem{malec2023localized} D.~Malec, C.~Tompkins, Localized versions of extremal problems, European J. Combin. 112 (2023) 103715.

\end{thebibliography}
\end{document}